\definecolor{viola}{rgb}{0.3,0,0.7}
\definecolor{ciclamino}{rgb}{0.5,0,0.5}
\def\pier #1{{\color{Green}#1}}
\def\pier #1{#1}
\def\@cite#1#2{[{{\bfseries #1}\if@tempswa , #2\fi}]}
\renewcommand{\section}{%
\@startsection{section}{1}{\z@}
{0.5truecm plus -1ex minus -.2ex}%
{1.0ex plus .2ex}{\bfseries\large}}
\def\@seccntformat#1{\csname the#1\endcsname.\ }
\numberwithin{equation}{section} 
\theoremstyle{theorem}
\newtheorem{thm}{Theorem}[section]
\newtheorem{lem}[thm]{Lemma}
\theoremstyle{definition}
\newtheorem{df}{Definition}[section]
\newtheorem{remark}{Remark}[section]
\newtheorem*{prth2.1}{Proof of Theorem 2.1}
\newtheorem*{prth2.2}{Proof of Theorem 2.2}
\newtheorem*{prth2.3}{Proof of Theorem 2.3}
\newcommand{\ep}{\varepsilon}
\newcommand{\RN}{\mathbb{R}^N}
\def\dt{\partial_t}
\def\checkmmode #1{\relax\ifmmode\hbox{#1}\else{#1}\fi}
\let\hat\widehat
\def\Beta{\hat{\vphantom t\smash\beta\mskip2mu}\mskip-1mu}
\def\Pi{\hat\pi}
\def\thetaz{\theta_0}
\def\phiz{\phi_0}
\def\erre{{\mathbb{R}}}
\let\non\nonumber
\def\aand{\quad\hbox{and}\quad}
\begin{document}
\footnote[0]
    {2010 {\it Mathematics Subject Classification}\/: 
    93C20, 34B15, 35A35, 82B26.  
    }
\footnote[0]
    {{\it Key words and phrases}\/: 
    nonlinear phase field systems; unbounded domains; time discretization; 
     error estimates.} 
%==========================title==========================
\begin{center}
    \Large{{\bf Time discretization of a nonlinear phase field system \\
in general domains
           }}
\end{center}
\vspace{5pt}
%===========================author=========================
\begin{center}
    Pierluigi Colli\\
    \vspace{2pt}
    Dipartimento di Matematica ``F. Casorati", Universit\`a di Pavia \\ 
    and Research Associate at the IMATI -- C.N.R. Pavia \\ 
    via Ferrata 5, 27100 Pavia, Italy\\
    {\tt pierluigi.colli@unipv.it}\\
    \vspace{12pt}
    Shunsuke Kurima%
   \footnote{Corresponding author}\\
    \vspace{2pt}
    Department of Mathematics, 
    Tokyo University of Science\\
    1-3, Kagurazaka, Shinjuku-ku, Tokyo 162-8601, Japan\\
    {\tt shunsuke.kurima@gmail.com}\\
    \vspace{2pt}
\end{center}
\begin{center}    
    \small \today
\end{center}

\vspace{2pt}
%=====================  Abstract  =======================
\newenvironment{summary}
{\vspace{.5\baselineskip}\begin{list}{}{%
     \setlength{\baselineskip}{0.85\baselineskip}
     \setlength{\topsep}{0pt}
     \setlength{\leftmargin}{12mm}
     \setlength{\rightmargin}{12mm}
     \setlength{\listparindent}{0mm}
     \setlength{\itemindent}{\listparindent}
     \setlength{\parsep}{0pt}
     \item\relax}}{\end{list}\vspace{.5\baselineskip}}
\begin{summary}
{\footnotesize {\bf Abstract.}
    This paper deals with 
    the nonlinear phase field system 
    \begin{equation*}
     \begin{cases}
         \partial_t (\theta +\ell \varphi) - \Delta\theta = f  
         & \mbox{in}\ \Omega\times(0, T),
 \\[1mm]
         \partial_t \varphi - \Delta\varphi + \xi + \pi(\varphi) 
         = \ell \theta,\ \xi\in\beta(\varphi)   
         & \mbox{in}\ \Omega\times(0, T) 
     \end{cases}
 \end{equation*}
    %    
    %under homogeneous Neumann boundary conditions
    in a {\it general} domain $\Omega\subseteq\RN$. 
    Here $N \in \mathbb{N}$, $T>0$, $\ell>0$, 
    $f$ is a source term, 
    $\beta$ is a maximal monotone graph and 
    $\pi$ is a Lipschitz continuous function. 
    We note that in the above system the nonlinearity $\beta+\pi$ 
    replaces the derivative of a potential of double well type. 
    Thus it turns out that 
    the system is a generalization of the Caginalp phase field model 
    and it has been studied by many authors 
    in the case that $\Omega$ is a bounded domain.   
    However, for unbounded domains  
    the analysis of the system seems to be at an early stage. 
    In this paper we study the existence of solutions  
    by employing a time discretization scheme 
    and passing to the limit as the time step $h$ goes to $0$. 
    In the limit procedure we face with the difficulty that 
    the embedding $H^1(\Omega) \hookrightarrow L^2(\Omega)$ 
    is not compact in the case of unbounded domains. 
    Moreover, we can prove an interesting error estimate of order $h^{1/2}$ 
    for the difference between continuous and discrete solutions. 
    }
\end{summary}
\vspace{10pt}

\newpage
%%==============================================================%%
%%==============                                  ==============%%
%%======                      Section1                    ======%%
%%====                                                      ====%%
%%==                                                         ==%%
%%====               Introduction Results            ====%%
%%======                                                  ======%%
%%==============                                  ==============%%
%%==============================================================%%

\section{Introduction and results} \label{Sec1}

In the present contribution we address a nonlinear phase field system 
in a {\it general} domain $\Omega \subseteq \RN$ 
and discuss it using a time discretization procedure, 
which turns out to be useful and efficient 
for the approximation and the existence proof. 
Moreover, we show an error estimate of order $1/2$ 
in suitable norms for the difference between continuous and discrete solutions. 

Our analysis moves from the consideration of 
the following simple version of 
the phase-field system of Caginalp type 
(cf.\ \cite{Cag, EllZheng}; one may also see 
the monographs \cite{BrokSpr, fremond, V1996}): 
\begin{align}
  & \dt \bigl( c_0 \theta + \ell \phi \bigr) - \kappa \Delta\theta = f
  & \quad \hbox{in $\Omega\times(0, T)$}, 
  \label{Iprima}
  \\
  & \dt\phi - \eta \Delta\phi + F'(\phi) = \ell \theta 
  & \quad \hbox{in $\Omega\times(0, T)$}, 
  \label{Iseconda}
\end{align}
where $\Omega$ is the three-dimensional domain in which the evolution takes place,
$T$ is some final time,
$\theta$ refers to the relative temperature around some critical value, 
that is taken to be $0$ without loss of generality,
and $\phi$ is the order parameter.
Moreover, $c_0$, $\ell$, $\kappa$ and $\eta$ are positive constants,
$f$ is a source term
and $F'$ is the derivative of a smooth double-well potential $F$, 
whose prototype reads 
\begin{align}\label{regpot}
  & F_{reg}(r) = \frac 14 \, (r^2-1)^2 \,, \quad r \in \erre. 
\end{align}
In our approach, we aim to consider other kinds of potentials, 
which are by now widely known and extensively used 
in the mathematical literature, that are the logarithmic potential and 
the double obstacle potential: 
\begin{align}
  & F_{log}(r) = \bigl( (1+r)\ln (1+r)+(1-r)\ln (1-r) \bigr) - c_1 \, r^2 
  \quad \hbox{if $r \in (-1,1)$}, 
  \label{logpot}
  \\
  & F_{obs}(r) = I(r) - c_2 \, r^2 \,,
  \quad r \in \erre. 
  \label{obspot}
\end{align}
Here, the coefficient $c_1$ in \eqref{logpot}
is larger than $1$, so that $F_{log}$ admits a double well, 
and 
$c_2$ in \eqref{obspot} is an arbitrary positive constant,  
whereas the function $I$ in \eqref{obspot} is the indicator function of $[-1,1]$, i.e.,
it takes the values $0$ or $+\infty$ according to whether or not $r$ belongs to 
$[-1,1]$.
Note that $F_{log}$ can be extended by continuity to the closed interval $[-1, 1]$, 
but its derivative $F_{log}'$ turns out to be singular 
as the variable approaches $-1$ from the right 
or $+1$ from the left. 
On the other hand, $F_{obs}$ is even a non-smooth potential and for it 
it is no longer possible to consider the derivative, 
but we have to deal with the subdifferential of $I$. 
Hence, 
in order to be as general as possible, 
in our investigation we allow the potential $F$ to be just the sum 
$$F=\Beta+\Pi,$$
where $\Beta$ is a convex function that is allowed to take the value $+\infty$ 
somewhere, 
and $\Pi$ is a smooth perturbation which may be concave.   
In such a case, $\Beta$ is supposed to be proper and lower semicontinuous, 
so that its subdifferential is well defined and can replace the derivative
which might not exist. 
Let us point out that, in the case of a multivalued subdifferential like $\partial I$, 
equation \eqref{Iseconda} becomes a differential inclusion. 

The system \eqref{Iprima}-\eqref{Iseconda} is of course complemented 
by initial conditions like $\theta(0)=\thetaz$ and $\phi(0)=\phiz$,   
and suitable boundary conditions.
Concerning the latter, 
we set the usual homogeneous Neumann condition 
for both $\theta$ and $\phi$, that is,
\begin{equation}
  \partial_{\nu} \theta = 0
  \aand
  \partial_{\nu} \phi = 0
  \quad \hbox{on $\partial\Omega\times(0, T)$}, 
  \non
\end{equation}
where $\partial_{\nu}$ denotes 
differentiation with respect to the outward normal of $\partial\Omega$. 
Indeed, by these conditions we are assuming that 
there is no flow exchange with the exterior of $\Omega$. 

Equations \eqref{Iprima}-\eqref{Iseconda} yield a system of phase field type. 
Such systems have been introduced (cf. \cite{BrokSpr, Cag, V1996}) 
in order to include phase dissipation effects 
in the dynamics of moving interfaces arising in thermally induced phase transitions 
(the reader may also see \cite{BCJV2012, CCE2011, M2014, MQ2011, MQ2009}). 
In our framework, we are actually considering 
the following form for the total free energy:  
\begin{equation}
G (\theta, \varphi) = 
\int_\Omega \left( - \frac{c_0}2 \theta^2 - \ell\theta \varphi + F (\varphi) 
                                                         + \frac \eta 2 |\nabla \varphi |^2 \right), 
\label{free}
\end{equation}
where $c_0 $ and $\ell$ stand for the specific heat and latent heat coefficients,
respectively, with a terminology motivated by earlier studies (see \cite{duvaut})
on the Stefan problem; let us also refer to the monography \cite{fremond},  
which deals with phase change models as well.  In this connection, 
it is worth to introduce the enthalpy $e$ by
\begin{equation}\label{aboute}
e= - \frac{\delta G}{\delta \theta}, 
\end{equation}
where $\frac{\delta G}{\delta \theta}$ denotes 
the variational derivative of $G $ with respect to $\theta$, 
so that \eqref{aboute} yields $e=c_0 \theta +\ell \varphi$. 
Then the governing balance and phase equations are given by 
\begin{gather}
\dt e + \mbox{div}\, {\bf q}  = f,  
\label{1phys}
\\
\dt \varphi +  \frac{\delta G}{\delta\varphi} =0, 
\label{2phys}
\end{gather}
where ${\bf q} $ denotes the thermal flux vector, 
$f$ represents some heat source and 
$\frac{\delta G}{\delta\varphi}$ is 
the variational derivative of $G$ with respect to $\varphi$.   
Hence \eqref{2phys} reduces 
exactly to \eqref{Iseconda} along with 
the homogeneous Neumann boundary condition for $\varphi$. 
Moreover, if we assume the classical Fourier law 
$ {\bf q} = - \kappa \, \nabla \theta $,
then  \eqref{1phys} is nothing but the usual energy balance equation of the 
Caginalp model.  
Moreover, \eqref{Iprima} follows from  \eqref{1phys} and 
the Neumann boundary condition for $\theta$ is a consequence of 
the no-flux condition $ {\bf q} \cdot {\bf n} =0 $ on the boundary. 
We \pier{notice} that the above phase field system has received 
a good deal of attention in the last decades 
\cite{ABHR2013, CCE2008, CMM2015, CM2009, CGMQ2017, MM2016} 
and 
can be deduced as a special gradient-flow problem 
(cf., e.g., \cite{RS06} and references therein).

\pier{Let us} point out that questions related to the well-posedness, 
long-time behaviour of solutions and optimal control problems have been 
investigated for the Caginalp system \eqref{Iprima}-\eqref{Iseconda} 
and for some variation or extension of this phase field system. 
Without any sake of completeness, \pier{we} mention the contributions 
\cite{B2013, EllZheng, CC2013, CGM2012, CC2012, CHIS2009, 
CGM2013, GraPetSch, KenmNiez, MN2018, MQ2013, M2015} 
for various qualitative analyses  
and \cite{BCGMR, CGM, CoGiMaRo, HoffJiang, HKKY}
for some related control problems. 

For the sake of simplicity, 
in the sequel of the paper 
we simply let $c_0 = \kappa = \eta = 1$ by observing that 
our treatment can be easily extended to the case of coefficients different from $1$. 
On the other hand, 
we keep the parameter $\ell$ in both equations \eqref{Iprima} and \eqref{Iseconda} 
since $\ell$ plays a role in the estimates. 

The case of unbounded domains 
has the difficult mathematical point that 
compactness methods cannot be applied directly 
(related discussions can be found, e.g., in \cite{FKY-2017, K2, K3, KY1, KY2}).  
It would be interesting to construct an applicable theory for the case of 
unbounded domains and to set assumptions for the case of unbounded domains 
by trying to keep some typical features  
in previous works, that is, in the case of bounded domains. 
By considering the case of unbounded domains, 
it may be possible to make a new finding 
which is not given or known in the case of bounded domains.  
Also, the new finding would be useful 
for other studies of partial differential equations. 

In this paper we consider 
the initial-boundary value problem on a {\it general} domain  
for the nonlinear phase field system 
%-----------------------------------------------------------------
%
%               Introduction of (P)
%
%-----------------------------------------------------------------
 \begin{equation*}\tag*{(P)}\label{P}
     \begin{cases}
         \partial_t (\theta +\ell \varphi) - \Delta\theta = f   
         & \mbox{in}\ \Omega\times(0, T), 
 \\[2mm]
         \partial_t \varphi - \Delta\varphi + \xi + \pi(\varphi) 
         = \ell \theta,\ \xi \in \beta(\varphi)   
         & \mbox{in}\ \Omega\times(0, T), 
 \\[2mm]
         \partial_{\nu}\theta= \partial_{\nu}\varphi = 0                                   
         & \mbox{on}\ \partial\Omega\times(0, T),
 \\[1mm]
        \theta(0) = \theta_0,\ \varphi(0)=\varphi_0                                          
         & \mbox{in}\ \Omega, 
     \end{cases}
 \end{equation*}
where $\Omega$ is a {\it bounded} domain or an {\it unbounded} domain in $\RN$ 
with smooth bounded boundary $\partial\Omega$ 
(e.g., $\Omega = \mathbb{R}^{N}\setminus \overline{B(0, R)}$,  
where $B(0, R)$ is the open ball with center $0$ and radius $R>0$) 
or $\Omega=\mathbb{R}^{N}$ 
(in this case, no boundary condition should be prescribed) 
or $\Omega=\mathbb{R}_{+}^{N}$.  
Moreover, we let $\ell>0$  
and deal with the following conditions (A1)-(A4): 
%-----------------------------------------------------------------
%
%                      Assumption
%
%-----------------------------------------------------------------
%
\begin{enumerate} 
\setlength{\itemsep}{0mm}
\item[(A1)] $\beta \subset \mathbb{R}\times\mathbb{R}$                                
is a maximal monotone graph with effective domain $D(\beta)$ 
and $\beta(r) = \partial\widehat{\beta}(r)$, where 
$\partial\widehat{\beta}$ denotes the subdifferential of 
a proper lower semicontinuous convex function 
$\widehat{\beta} : \mathbb{R} \to [0, +\infty]$ satisfying $\widehat{\beta}(0) = 0$.    
\item[(A2)] $\pi : \mathbb{R} \to \mathbb{R}$ is                         
a Lipschitz continuous function and $\pi(0) = 0$. 
Moreover, there exists a function $\widehat{\pi} \in C^1(\mathbb{R})$ 
such that $\pi = \widehat{\pi}'$.  
\item[(A3)] $f \in L^2(0, T; L^2(\Omega))$.  
\item[(A4)] $\theta_0, \varphi_0 \in H^1(\Omega)$  
and $\widehat{\beta}(\varphi_{0}) \in L^1(\Omega)$.    
\end{enumerate} 

Please note that 
the three functions in \eqref{regpot}-\eqref{obspot} actually satisfy (A1) and (A2), 
indeed we have that  
\begin{align*}
&\widehat{\beta}(r)=\frac{1}{4}r^4,\ \beta(r)=r^3,\ 
\widehat{\pi}(r)=\frac{1}{4}(-2r^2+1),\ \pi(r)=-r,\ r \in \mathbb{R}, 
\quad\mbox{in \eqref{regpot};} 
\\[7mm] \notag 
&\widehat{\beta}(r)=
\begin{cases}
(1+r)\log(1+r) + (1-r)\log(1-r)  &\mbox{if}\ r \in (-1, 1), 
\\ 
2\log 2  &\mbox{if}\ r \in \{-1, 1\},  
\\ 
+\infty &\mbox{otherwise}, 
\end{cases}
\\
&\beta(r)=\log\frac{1+r}{1-r}\quad \mbox{provided that }\  r \in (-1, 1), 
\\ 
&\widehat{\pi}(r)=-c_1 r^2,\ \pi(r)=-2c_1 r,\ r \in \mathbb{R}, 
\quad\mbox{in \eqref{logpot};} 
\\[7mm] \notag  
&\widehat{\beta}(r)=I(r),\ r \in \mathbb{R},\  
\beta(r)=
\begin{cases}
0  &\mbox{if}\ r \in (-1, 1), 
\\ 
[0, \infty)  &\mbox{if}\ r=1, 
\\ 
(-\infty, 0] &\mbox{if}\ r=-1, 
\end{cases}
\\ 
&\widehat{\pi}(r)=-c_2 r^2,\ \pi(r)=-2c_2 r,\ r \in \mathbb{R}, 
\quad\mbox{in \eqref{obspot}.} 
\end{align*}

\smallskip

\pier{Let us define the Hilbert spaces 
   $$
   H:=L^2(\Omega), \quad V:=H^1(\Omega)
   $$
 with inner products 
 \begin{align*} 
 &(u_{1}, u_{2})_{H}:=\int_{\Omega}u_{1}u_{2}\,dx \quad  (u_{1}, u_{2} \in H), \\
 &(v_{1}, v_{2})_{V}:=
 \int_{\Omega}\nabla v_{1}\cdot\nabla v_{2}\,dx + \int_{\Omega} v_{1}v_{2}\,dx \quad 
 (v_{1}, v_{2} \in V),
\end{align*}
 respectively,
 and with the related Hilbertian norms.}
 % $\|u\|_{H}:=(u, u)_{H}^{1/2}$ ($u\in H$) and 
 %$\|v\|_{V}:=(v, v)_{V}^{1/2}$ ($v\in V$), respectively.  
 Moreover, \pier{we use the notation}
   $$
   W:=\bigl\{z\in H^2(\Omega)\ |\ \partial_{\nu}z = 0 \quad 
   \mbox{a.e.\ on}\ \partial\Omega\bigr\}.
   $$ 

This paper is organized as follows. 
In Section \ref{Sec2} we introduce a time discretization of \ref{P}, 
set precisely the approximate problem,  
and state the main theorems. 
Section \ref{Sec3} contains 
the proof of the existence for the discrete problem. 
In Section \ref{Sec4} we establish uniform estimates for the approximate problem 
and pass to the limit. 
Section \ref{Sec5} show error estimates between solutions of \ref{P} 
and solutions of the approximate problem. 

%%==============================================================%%
%%==============                                  ==============%%
%%======                      Section2                    ======%%
%%====                                                      ====%%
%%==                                                          ==%%
%%====   Time discretization and main results  ====%%
%%======                                                  ======%%
%%==============                                  ==============%%
%%==============================================================%%

\section{Time discretization and main results}\label{Sec2}

We will prove existence of solutions to \ref{P} 
by employing a time discretization scheme.   
More precisely, we will establish existence for \ref{P} 
by passing to the limit in the problem 
%-----------------------------------------------------------------
%
%               Introduction of (P_{n})
%
%-----------------------------------------------------------------
%
 \begin{equation*}\tag*{(P)$_{n}$}\label{Pn}
     \begin{cases}
         \delta_h \theta_n + \ell\delta_h \varphi_n-\Delta\theta_{n+1} = f_{n+1},  
     \\[2mm] 
         \delta_h \varphi_n -\Delta\varphi_{n+1} + \xi_{n+1} + \pi(\varphi_{n+1}) 
         = \ell\theta_n,\ \xi_{n+1} \in \beta(\varphi_{n+1}) 
     \end{cases}
 \end{equation*}
for $n=0, ..., N-1$ 
as $h\searrow0$, 
where $h=\frac{T}{N}$, $N \in \mathbb{N}$, 
\begin{align}\label{deltah}
\delta_h \theta_n := \dfrac{\theta_{n+1}-\theta_n}{h},\ 
\delta_h \varphi_n := \dfrac{\varphi_{n+1}-\varphi_n}{h}, 
\end{align}
and $f_k:=\frac{1}{h}\int_{(k-1)h}^{kh} f(s)\,ds$ for $k=1, ..., N$. 
Also, putting 
\begin{align}
& \widehat{\theta}_h (0) := \theta_0,\ 
   \partial_t \widehat{\theta}_h (t) := \delta_h \theta_n, \quad 
   \widehat{\varphi}_h (0) := \varphi_0,\ 
   \partial_t \widehat{\varphi}_h (t) := \delta_h \varphi_n, \label{hat} 
\\ 
&\overline{\theta}_h (t) := \theta_{n+1},\ \overline{f}_h (t) := f_{n+1},\ 
  \overline{\varphi}_h (t) := \varphi_{n+1},\ 
  \overline{\xi}_h (t) := \xi_{n+1},\ \underline{\theta}_h (t) := \theta_{n} 
\label{overandunderline}   
\end{align}
for a.a.\ $t \in (nh, (n+1)h)$, $n=0, ..., N-1$, 
we can rewrite \ref{Pn} as the problem 
%-----------------------------------------------------------------
%
%               Introduction of (P_{h})
%
%-----------------------------------------------------------------
%
 \begin{equation*}\tag*{(P)$_{h}$}\label{Ph}
     \begin{cases}
         \partial_t \widehat{\theta}_h + \ell\partial_t \widehat{\varphi}_h 
         -\Delta\overline{\theta}_h = \overline{f}_h 
         & \mbox{in}\ \Omega\times(0, T),
     \\[0mm] 
        \partial_t \widehat{\varphi}_h - \Delta\overline{\varphi}_h 
        + \overline{\xi}_h + \pi(\overline{\varphi}_h) = \ell\underline{\theta}_h,\ 
        \overline{\xi}_h \in \beta(\overline{\varphi}_h) 
         & \mbox{in}\ \Omega\times(0, T), 
     \\ 
         \widehat{\theta}(0) = \theta_0,\ \widehat{\varphi}(0)=\varphi_0                                          
         & \mbox{in}\ \Omega.  
     \end{cases}
 \end{equation*}

%%%%%%%%%%%%%%%%%%%%%%Remark%%%%%%%%%%%%%%%%%%%%%%%%%%%
\begin{remark}
We have 
\begin{align}
&\|\widehat{\theta}_h\|_{L^2(0, T; H)}^2 
\leq h\|\theta_0\|_{H}^2 + 2\|\overline{\theta}_h\|_{L^2(0, T; H)}^2, \label{rem1} 
\\ 
&\|\widehat{\varphi}_h\|_{L^2(0, T; H)}^2 
\leq h\|\varphi_0\|_{H}^2 + 2\|\overline{\varphi}_h\|_{L^2(0, T; H)}^2, \label{rem2}
\\[1mm] 
&\|\widehat{\theta}_h\|_{L^{\infty}(0, T; V)} 
= \max\{\|\theta_{0}\|_{V}, \|\overline{\theta}_h\|_{L^{\infty}(0, T; V)}\}, \label{rem3} 
\\[1mm] 
&\|\widehat{\varphi}_h\|_{L^{\infty}(0, T; V)} 
= \max\{\|\varphi_{0}\|_{V}, \|\overline{\varphi}_h\|_{L^{\infty}(0, T; V)}\}, 
\label{rem4}
\\ 
&\|\overline{\theta}_h - \widehat{\theta}_h\|_{L^2(0, T; H)}^2 
= \frac{h^2}{3}\|\partial_t \widehat{\theta}_h\|_{L^2(0, T; H)}^2, \label{rem5}
\\ 
&\|\overline{\varphi}_h - \widehat{\varphi}_h\|_{L^2(0, T; H)}^2 
= \frac{h^2}{3}\|\partial_t \widehat{\varphi}_h\|_{L^2(0, T; H)}^2, \label{rem6}
\end{align}
as the reader can check directly 
by using the definitions \eqref{hat} and \eqref{overandunderline}.   
\end{remark}
%%%%%%%%%%%%%%%%%%%%%%Remark%%%%%%%%%%%%%%%%%%%%%%%%%%%
\smallskip

We define solutions of \ref{P} as follows. 

%%%%%%%%%%%%%%%%%%%%%%Def1%%%%%%%%%%%%%%%%%%%%%%%%%%%
\begin{df}
A triplet $(\theta, \varphi, \xi)$ with 
\begin{align*}
&\theta \in H^1(0, T; H) \cap L^{\infty}(0, T; V) \cap L^2(0, T; W), \\
&\varphi \in H^1(0, T; H) \cap L^{\infty}(0, T; V) \cap L^2(0, T; W), \\
&\xi \in L^2(0, T; H) 
\end{align*}
is called a solution of \ref{P} if $(\theta, \varphi, \xi)$ satisfies 
\begin{align}
&\partial_t (\theta +\ell \varphi) - \Delta\theta = f   
       \quad \mbox{a.e.\ on}\ \Omega\times(0, T), \label{df1} 
\\ 
&\partial_t \varphi - \Delta\varphi + \xi + \pi(\varphi) = \ell \theta,\ 
                                                                      \xi \in \beta(\varphi)   
         \quad \mbox{a.e.\ on}\ \Omega\times(0, T), \label{df2} 
\\ 
&\theta(0) = \theta_0,\ \varphi(0)=\varphi_0                                          
        \quad \mbox{a.e.\ on}\ \Omega. \label{df3}
\end{align}
\end{df}
%%%%%%%%%%%%%%%%%%%%%%Def1%%%%%%%%%%%%%%%%%%%%%%%%%%%
\smallskip

Now the main results read as follows.

%%%%%%%%%%%%%%%%%%Theorem1%%%%%%%%%%%%%%%%%%%%%%%%
%%%%%%%%%%%%%%%%%%%%%%%%%%%%%%%%%%%%%%%%%%%%%%%%%%
\begin{thm}\label{maintheorem1}
Assume that {\rm (A1)-(A4)} hold. 
Then 
for all 
$h \in \left(0, \frac{1}{\|\pi'\|_{L^{\infty}(\mathbb{R})}}\right)$ 
there exists a unique solution $(\theta_{n+1}, \varphi_{n+1}, \xi_{n+1})$ 
of {\rm \ref{Pn}} 
satisfying 
$$
\theta_{n+1}, \varphi_{n+1} \in W\quad 
and\quad \xi_{n+1} \in H \qquad\mbox{for}\ n=0, ..., N-1.
$$ 
\end{thm}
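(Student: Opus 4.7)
The plan is to exploit the crucial decoupling of the two equations in \ref{Pn}. The second equation involves $\theta$ only through $\underline{\theta}_h=\theta_n$, so it contains the unknowns $\varphi_{n+1},\xi_{n+1}$ and the known data $\varphi_n,\theta_n$, but \emph{not} $\theta_{n+1}$. Consequently one may first solve the second equation for $(\varphi_{n+1},\xi_{n+1})$ and then recover $\theta_{n+1}$ from the first equation, which becomes a linear elliptic Neumann problem with known right-hand side. I proceed by induction on $n$: assumption (A4) supplies $\theta_0,\varphi_0\in V$ with $\widehat{\beta}(\varphi_0)\in L^1(\Omega)$, and the steps below ensure that each iterate belongs to $W\hookrightarrow V$.

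For $\varphi_{n+1}$, multiplying the second equation by $h$ reduces the question to finding $\varphi\in W$ and $\xi\in H$ with $\xi\in\beta(\varphi)$ solving
\begin{equation*}
\varphi-h\Delta\varphi+h\xi+h\pi(\varphi)=g\quad\mbox{a.e.\ in }\Omega,\quad \partial_{\nu}\varphi=0\ \mbox{ on }\partial\Omega,
\end{equation*}
with $g:=\varphi_n+h\ell\theta_n\in H$. Let $A:=-\Delta+\beta$ denote the operator in $H$ whose domain consists of those $v\in W$ admitting a selection $\xi\in\beta(v)$ in $H$; by Brezis' theory, $A$ is maximal monotone in $H$, being the $H$-subdifferential of the proper convex lower semicontinuous functional $v\mapsto\tfrac12\int_{\Omega}|\nabla v|^2\,dx+\int_{\Omega}\widehat{\beta}(v)\,dx$. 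To absorb the Lipschitz, possibly non-monotone perturbation $\pi$, I use a Banach fixed-point argument: define $\Phi:H\to H$ by $\Phi(w):=\varphi$, where $\varphi$ is the unique element of $D(A)$ with $\varphi+hA\varphi\ni g-h\pi(w)$. Existence and uniqueness of $\varphi$ are guaranteed by maximal monotonicity of $A$, and testing the difference of two such resolvent equations by $\Phi(w_1)-\Phi(w_2)$ together with monotonicity of $\beta$ yields
\begin{equation*}
\|\Phi(w_1)-\Phi(w_2)\|_H\le h\|\pi'\|_{L^{\infty}(\mathbb{R})}\,\|w_1-w_2\|_H,
\end{equation*}
so $\Phi$ is a strict contraction precisely when $h\|\pi'\|_{L^{\infty}(\mathbb{R})}<1$. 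Its unique fixed point produces $\varphi_{n+1}\in W$ and, by reading off the equation, a unique $\xi_{n+1}\in H$.

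Given $\varphi_{n+1}\in W$, the first equation of \ref{Pn} becomes the linear Neumann problem
\begin{equation*}
\theta_{n+1}-h\Delta\theta_{n+1}=\theta_n-\ell(\varphi_{n+1}-\varphi_n)+hf_{n+1}\in H.
\end{equation*}
Existence and uniqueness of the weak solution in $V$ follow from Lax--Milgram applied to the coercive bilinear form $a(u,v):=(u,v)_H+h(\nabla u,\nabla v)_H$, and standard elliptic $H^2$-regularity upgrades the solution to $\theta_{n+1}\in W$. Uniqueness for the coupled system is then immediate: subtracting two candidate solutions, testing the resulting equalities by the differences of the $\theta$- and $\varphi$-components, summing and invoking the monotonicity of $\beta$ together with the Lipschitz bound on $\pi$ yields an elementary inequality forcing both differences to vanish under the same size restriction on $h$.

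The main obstacle in the general-domain setting is the identification $D(A)\subset W$ and the $H^2$-regularity used above: on bounded smooth domains both facts are classical, on unbounded domains with bounded smooth boundary they follow by standard localization and boundary flattening, and on $\mathbb{R}^N$ or $\mathbb{R}^N_+$ they reduce to well-known regularity of the Laplacian. Once these regularity statements are granted, the combination of maximal monotonicity of $-\Delta+\beta$ in $H$ with the contraction fixed-point argument makes the remaining computations routine.
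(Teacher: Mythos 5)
Your decomposition of \ref{Pn} into ``solve the $\varphi$-equation first, then the linear $\theta$-equation'' is exactly the paper's strategy (it rewrites \ref{Pn} as the system (Q)$_n$ and reduces to the case $n=0$), and your treatment of the $\theta$-equation by Lax--Milgram plus $H^2$-regularity matches the paper. Where you genuinely diverge is in the elliptic sub-problem for $(\varphi_{n+1},\xi_{n+1})$: you invoke the abstract fact that $A=-\Delta+\beta$ is the $H$-subdifferential of $v\mapsto\tfrac12\int_\Omega|\nabla v|^2+\int_\Omega\widehat\beta(v)$ with $D(A)\subset W$ and an $L^2$ selection $\xi\in\beta(v)$, and then absorb $\pi$ by a Banach fixed point using nonexpansivity of $(I+hA)^{-1}$, with contraction constant $h\|\pi'\|_{L^\infty(\mathbb{R})}<1$. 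The paper instead folds $\pi$ directly into the operator $1-h\Delta+h\beta_\ep+h\pi:V\to V^*$, which is monotone and coercive under the same restriction on $h$, gets surjectivity from Browder--Minty, and then derives uniform bounds on $\|\beta_\ep(\varphi_\ep)\|_H$ and $\|\Delta\varphi_\ep\|_H$ before passing to the limit $\ep\to0$. Your route is cleaner in that it separates the monotone core from the Lipschitz perturbation and quotes classical resolvent theory; the price is that the single statement you defer --- $D(\partial\phi)\subset W$ together with the existence of $\xi\in\beta(\varphi)\cap H$, \emph{on a general, possibly unbounded domain} --- is precisely the nontrivial content of the paper's Lemma 3.1. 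It is not quite fair to call it ``standard localization'': since $H^1(\Omega)\hookrightarrow L^2(\Omega)$ is not compact here, the identification of the weak limit of $\beta_\ep(\varphi_\ep)$ as an element of $\beta(\varphi)$ requires the paper's device of testing against compactly supported functions, using compactness of $H^1(D)\hookrightarrow L^2(D)$ on bounded subdomains $D$, and applying the maximal-monotonicity convergence lemma on each bounded $E\subset\Omega$ via characteristic functions. If you carry out that argument (or cite a version of Brezis' theorem valid for arbitrary domains), your proof is complete; the contraction estimate, the uniqueness argument, and the handling of the $\theta$-equation are all correct as written.
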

%%%%%%%%%%%%%%%%%%Theorem1%%%%%%%%%%%%%%%%%%%%%%%%
%%%%%%%%%%%%%%%%%%%%%%%%%%%%%%%%%%%%%%%%%%%%%%%%%%

%%%%%%%%%%%%%%%%%%Theorem2%%%%%%%%%%%%%%%%%%%%%%%%
%%%%%%%%%%%%%%%%%%%%%%%%%%%%%%%%%%%%%%%%%%%%%%%%%%
\begin{thm}\label{maintheorem2}
Assume that {\rm (A1)-(A4)} hold. 
Then there exists a unique solution of {\rm \ref{P}}. 
\end{thm}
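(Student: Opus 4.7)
My plan is to prove uniqueness by a standard energy/Gronwall argument and to establish existence by passing to the limit as $h\searrow 0$ in the discrete problem \ref{Ph} produced by Theorem~\ref{maintheorem1}. The main obstacle is the lack of compactness of the embedding $V\hookrightarrow H$ when $\Omega$ is unbounded, which forbids a direct application of Aubin--Lions and forces us to identify the maximal monotone nonlinearity $\beta$ by a Minty--type monotonicity argument instead of via pointwise convergence.

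For the uniform estimates, I would first test the $\theta$-equation of \ref{Ph} with $\overline{\theta}_h$ and the $\varphi$-equation with $\partial_t\widehat{\varphi}_h$; the coupling terms combine with the help of (A4), of the convexity inequality $\overline{\xi}_h\,\partial_t\widehat{\varphi}_h\ge \frac{1}{h}\bigl(\widehat{\beta}(\overline{\varphi}_h)-\widehat{\beta}(\underline{\varphi}_h)\bigr)$, of the Lipschitz bound on $\pi$, and of a discrete Gronwall lemma to give uniform bounds on $\widehat{\theta}_h$ and $\widehat{\varphi}_h$ in $H^1(0,T;H)\cap L^\infty(0,T;V)$ and on $\widehat{\beta}(\overline{\varphi}_h)$ in $L^\infty(0,T;L^1(\Omega))$. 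A second round of tests against $-\Delta\overline{\theta}_h$ and $-\Delta\overline{\varphi}_h$, using $\int_\Omega\overline{\xi}_h(-\Delta\overline{\varphi}_h)\ge 0$ (valid by monotonicity of $\beta$ and the $W$-regularity assumed for $\overline{\varphi}_h$), upgrades these to $L^2(0,T;W)$ bounds for $\overline{\theta}_h,\overline{\varphi}_h$ and an $L^2(0,T;H)$ bound for $\overline{\xi}_h$; the bounds \eqref{rem1}--\eqref{rem6} then transfer to all the interpolants.

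For the passage to the limit, weak/weak-$*$ compactness extracts a subsequence along which $\widehat{\theta}_h,\widehat{\varphi}_h\to\theta,\varphi$ weakly in $H^1(0,T;H)$ and weakly-$*$ in $L^\infty(0,T;V)$, while $\overline{\theta}_h,\overline{\varphi}_h\to\theta,\varphi$ weakly in $L^2(0,T;W)$ (the limits being identified by \eqref{rem5}--\eqref{rem6}) and $\overline{\xi}_h\to\xi$ weakly in $L^2(0,T;H)$; the linear terms in \ref{Ph} pass to the limit immediately. To handle the nonlinear terms, I would exploit that on every bounded subdomain $\Omega_R=\Omega\cap B(0,R)$ the embedding $H^1(\Omega_R)\hookrightarrow L^2(\Omega_R)$ is compact, so Aubin--Lions yields $\overline{\varphi}_h\to\varphi$ strongly in $L^2(0,T;L^2(\Omega_R))$ for every $R>0$, hence (up to a further subsequence) a.e.\ on $\Omega\times(0,T)$; combined with Lipschitz continuity of $\pi$ and a uniform $L^2$-bound, dominated convergence gives $\pi(\overline{\varphi}_h)\to\pi(\varphi)$ strongly in $L^2(0,T;H)$. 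This is the hard part: to identify $\xi\in\beta(\varphi)$ without global compactness, I would test the second equation of \ref{Ph} against $\overline{\varphi}_h$ itself and obtain
\begin{equation*}
\int_0^T\!\!\int_\Omega \overline{\xi}_h\,\overline{\varphi}_h \,dx\,dt
= -\int_0^T\!\!\int_\Omega \partial_t\widehat{\varphi}_h\cdot\overline{\varphi}_h\,dx\,dt
-\int_0^T\!\!\|\nabla\overline{\varphi}_h\|_H^2\,dt
-\int_0^T\!\!\int_\Omega \pi(\overline{\varphi}_h)\overline{\varphi}_h\,dx\,dt
+\ell\!\int_0^T\!\!\int_\Omega \underline{\theta}_h\,\overline{\varphi}_h\,dx\,dt.
\end{equation*}
Replacing $\overline{\varphi}_h$ by $\widehat{\varphi}_h$ in the first integral at the cost of a vanishing error by \eqref{rem6}, using $\int_0^T\!\int_\Omega \partial_t\widehat{\varphi}_h\cdot\widehat{\varphi}_h=\frac{1}{2}(\|\widehat{\varphi}_h(T)\|_H^2-\|\varphi_0\|_H^2)$ together with weak lower semicontinuity of the $V$-seminorm and the already established strong convergences, one derives $\limsup_{h\to 0}\int_0^T\!\int_\Omega \overline{\xi}_h\overline{\varphi}_h\le \int_0^T\!\int_\Omega \xi\varphi$; the lsc of $\int_0^T\!\int_\Omega\widehat{\beta}(\cdot)$ under weak $L^2$-convergence combined with $\overline{\xi}_h\in\beta(\overline{\varphi}_h)$ then yields $\xi\in\beta(\varphi)$ a.e.\ by the standard maximal monotonicity criterion.

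Uniqueness is comparatively routine: given two solutions $(\theta_i,\varphi_i,\xi_i)$, set $\bar\theta=\theta_1-\theta_2$ and $\bar\varphi=\varphi_1-\varphi_2$. Testing the difference of the first equations with $\bar\theta$ and of the second with $\ell\bar\varphi$, summing, dropping the nonnegative contribution $\int_\Omega(\xi_1-\xi_2)(\varphi_1-\varphi_2)\ge 0$ from the monotonicity of $\beta$, and absorbing the cross term $\ell\bar\theta\,\partial_t\bar\varphi$ via integration by parts together with the Lipschitz bound on $\pi$, one arrives at a differential inequality to which Gronwall applies and gives $\bar\theta\equiv\bar\varphi\equiv 0$; then $\xi_1=\xi_2$ follows from the $\varphi$-equation.
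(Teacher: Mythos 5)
Your overall architecture (a priori estimates for \ref{Ph}, weak compactness, local strong compactness on bounded subdomains to treat $\pi$, monotonicity to identify $\xi$, Gronwall for uniqueness) is the same as the paper's, and the estimate part is essentially workable even though you reach the $W$-regularity and the bound on $\overline{\xi}_h$ by testing with $-\Delta\overline{\varphi}_h$ where the paper tests with $\xi_{n+1}$ and then argues by comparison in the equation; either order succeeds. (A small inaccuracy: testing the first equation with $\overline{\theta}_h$ alone gives $\widehat{\theta}_h$ bounded only in $L^{\infty}(0,T;H)\cap L^2(0,T;V)$, not in $H^1(0,T;H)\cap L^{\infty}(0,T;V)$; the latter requires the extra test with $\delta_h\theta_n$ or a comparison after the second round, which your scheme does eventually supply.) There are, however, two genuine gaps.

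The first concerns the identification $\xi\in\beta(\varphi)$. Your global Minty argument needs $\limsup_{h\to0}\int_0^T\!\int_\Omega\overline{\xi}_h\,\overline{\varphi}_h\le\int_0^T\!\int_\Omega\xi\varphi$, but in the identity you display the terms $\int_0^T\!\int_\Omega\pi(\overline{\varphi}_h)\,\overline{\varphi}_h$ and $\ell\int_0^T\!\int_\Omega\underline{\theta}_h\,\overline{\varphi}_h$ are products of sequences that converge only \emph{weakly} on all of $\Omega\times(0,T)$; the strong convergences you have established live on bounded subdomains only, and no tightness estimate prevents mass from escaping to infinity, so these products do not pass to the limit. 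The same objection defeats your claim that $\pi(\overline{\varphi}_h)\to\pi(\varphi)$ strongly in $L^2(0,T;H)$ by dominated convergence: a.e.\ convergence plus a uniform $L^2$ bound does not yield strong $L^2$ convergence, and no integrable dominating function is available (local strong convergence is all that is needed to pass to the limit against compactly supported test functions, so this particular claim should simply be weakened). The paper avoids the Minty inequality altogether by \emph{localizing the monotonicity argument}: for an arbitrary bounded $E\subset\Omega$ one has $1_E\overline{\varphi}_h\to 1_E\varphi$ strongly in $L^2(0,T;H)$, hence $\int_0^T(\overline{\xi}_h,1_E\overline{\varphi}_h)_H\to\int_0^T(\xi,1_E\varphi)_H$ by weak--strong pairing with no limsup needed; since $\widehat{\beta}(0)=0=\min\widehat{\beta}$ gives $0\in\beta(0)$, one has $1_E\overline{\xi}_h\in\beta(1_E\overline{\varphi}_h)$ a.e., the standard criterion yields $1_E\xi\in\beta(1_E\varphi)$, i.e. $\xi\in\beta(\varphi)$ a.e.\ on $E\times(0,T)$, and $E$ is arbitrary. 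You need this localization (or a tightness estimate that the hypotheses do not obviously provide).

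The second gap is in uniqueness. Testing the difference of the first equations with $\bar\theta=\theta_1-\theta_2$ produces the cross term $\ell\int_\Omega\partial_t\bar\varphi\,\bar\theta$, which involves $\partial_t\bar\varphi$ and is \emph{not} absorbed by integrating by parts in time: that merely trades it for $\ell\int_\Omega\bar\varphi\,\partial_t\bar\theta$, equally uncontrollable by the quantities appearing in the Gronwall inequality. The paper's device is to integrate the first equation over $(0,t)$ \emph{before} testing with $\bar\theta$; the cross term then becomes $\ell(\bar\varphi(t),\bar\theta(t))_H$ and cancels exactly against the term $\ell(\bar\theta,\bar\varphi)_H$ produced by testing the second equation with $\bar\varphi$, at the harmless price of the extra term $\frac12\frac{d}{dt}\bigl\|\nabla\int_0^t\bar\theta(s)\,ds\bigr\|_H^2$. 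As written, your uniqueness argument does not close.
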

%%%%%%%%%%%%%%%%%%Theorem2%%%%%%%%%%%%%%%%%%%%%%%%
%%%%%%%%%%%%%%%%%%%%%%%%%%%%%%%%%%%%%%%%%%%%%%%%%%

%%%%%%%%%%%%%%%%Theorem Error esti%%%%%%%%%%%%%%%%%%%
%%%%%%%%%%%%%%%%%%%%%%%%%%%%%%%%%%%%%%%%%%%%%%%%%%%
\begin{thm}\label{erroresti} 
Assume that {\rm (A1)-(A4)} hold. 
Assume further that 
$f \in W^{1,1}(0, T; H)$. 
Then 
there exists 
$h_0 \in \left(0, \min\left\{1, \frac{1}{\|\pi'\|_{L^{\infty}(\mathbb{R})}}\right\}\right)$ 
such that  
for all $\ell >0$ 
there exists a constant $M_1=M_1(\ell, T)>0$ such that 
\begin{align}\label{erroresti1}
&\ell\|\widehat{\varphi}_h - \varphi \|_{L^{\infty}(0, T; H)} 
+ \ell\|\overline{\varphi}_h - \varphi\|_{L^2(0, T; V)} \\ \notag
&+ \|\widehat{\theta}_h - \theta 
                                     + \ell(\widehat{\varphi}_h - \varphi)\|_{L^{\infty}(0, T; H)} 
+ \|\overline{\theta}_h - \theta\|_{L^2(0, T; V)}  \\ \notag 
&\leq M_1 h^{1/2} 
\end{align}
for all $h \in (0, h_0)$.  
In particular, 
for all $\ell >0$ 
there exists a constant $M_2=M_2(\ell, T)>0$ such that 
\begin{align*}%\label{erroresti2}
 \|\widehat{\theta}_h - \theta \|_{L^{\infty}(0, T; H)} 
 \leq M_2 h^{1/2} 
\end{align*} 
for all $h \in (0, h_0)$.   
\end{thm}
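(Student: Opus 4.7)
The plan is to form the residual system by subtracting the equations of \ref{P} from those of \ref{Ph}, and to carry out a coupled energy estimate on it, carefully tracking the mismatch between the piecewise linear interpolants $\widehat\theta_h, \widehat\varphi_h$ and the piecewise constant ones $\overline\theta_h, \overline\varphi_h, \underline\theta_h$ via the identities in Remark~2.1. Setting $\widetilde\theta := \widehat\theta_h - \theta$, $\widetilde\varphi := \widehat\varphi_h - \varphi$, and the enthalpy-error $E_h := \widetilde\theta + \ell\widetilde\varphi$, both errors vanish at $t=0$ by matching initial data, and the residuals read $\partial_t E_h - \Delta(\overline\theta_h - \theta) = \overline f_h - f$ and $\partial_t \widetilde\varphi - \Delta(\overline\varphi_h - \varphi) + (\overline\xi_h - \xi) + (\pi(\overline\varphi_h) - \pi(\varphi)) = \ell(\underline\theta_h - \theta)$.

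A preliminary step is to derive enhanced uniform-in-$h$ regularity for \ref{Ph} exploiting the stronger assumption $f \in W^{1,1}(0,T;H)$. Writing the discrete equations at two consecutive levels, dividing the difference by $h$ and testing with $\delta_h\theta_n$ and $\delta_h\varphi_n$, yields uniform bounds on $\partial_t\widehat\theta_h$ and $\partial_t\widehat\varphi_h$ in $L^2(0,T;V)$. The gradient analogues of \eqref{rem5}--\eqref{rem6} then give $\|\nabla(\widehat\theta_h - \overline\theta_h)\|_{L^2(0,T;H)}^2 + \|\nabla(\widehat\varphi_h - \overline\varphi_h)\|_{L^2(0,T;H)}^2 = O(h^2)$, while a direct pointwise argument produces $\|\overline f_h - f\|_{L^2(0,T;H)}^2 \leq h\,\|f'\|_{L^1(0,T;H)}^2$.

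The core estimate comes from testing the first residual against $E_h$ and the second against $\ell^2(\overline\varphi_h - \varphi)$ in $H$, then summing and integrating over $(0,t)$. The time-derivative contributions produce $\tfrac12\|E_h(t)\|_H^2 + \tfrac{\ell^2}{2}\|\widetilde\varphi(t)\|_H^2$; the Laplacians yield the dissipation $\int_0^t\!\bigl(\|\nabla(\overline\theta_h-\theta)\|_H^2 + \ell^2\|\nabla(\overline\varphi_h-\varphi)\|_H^2\bigr)\,ds$; monotonicity of $\beta$ makes $\ell^2(\overline\xi_h - \xi, \overline\varphi_h - \varphi)_H \geq 0$. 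The critical gradient cross-term $\ell(\nabla(\overline\theta_h - \theta), \nabla\widetilde\varphi)_H$ arising from the first test is split via $\nabla\widetilde\varphi = \nabla(\overline\varphi_h - \varphi) + \nabla(\widehat\varphi_h - \overline\varphi_h)$; with carefully tuned Young's constants, the first piece is absorbed into the dissipation, while the second is routed into the $O(h^2)$ stair-step remainder already controlled in the preliminary step. The leftover terms --- the time mismatch $\ell^2(\partial_t\widetilde\varphi, \overline\varphi_h - \widehat\varphi_h)_H$, the Lipschitz contribution of $\pi$, the data term $(\overline f_h - f, E_h)_H$, and the forcing $\ell^3(\underline\theta_h - \theta, \overline\varphi_h - \varphi)_H$ --- are all bounded by Cauchy--Schwarz and Young, producing either $O(h)$ quantities or Gronwall-type terms $C\bigl(\|E_h\|_H^2 + \ell^2\|\widetilde\varphi\|_H^2\bigr)$.

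Gronwall's lemma then delivers $\|E_h\|_{L^\infty(0,T;H)}^2 + \ell^2\|\widetilde\varphi\|_{L^\infty(0,T;H)}^2 + \|\nabla(\overline\theta_h - \theta)\|_{L^2(0,T;H)}^2 + \ell^2\|\nabla(\overline\varphi_h - \varphi)\|_{L^2(0,T;H)}^2 \leq M_1^2\,h$. Combining with the elementary bound $\|\overline\varphi_h - \varphi\|_H^2 \leq 2\|\widetilde\varphi\|_H^2 + 2\|\widehat\varphi_h - \overline\varphi_h\|_H^2$ upgrades the $L^2$-in-time control on $\overline\varphi_h - \varphi$ to the full $V$-norm, producing \eqref{erroresti1}, and the corollary on $\|\widehat\theta_h - \theta\|_{L^\infty(0,T;H)}$ is immediate from $\widetilde\theta = E_h - \ell\widetilde\varphi$ and the triangle inequality. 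The main obstacle is the gradient cross-term $\ell(\nabla(\overline\theta_h - \theta), \nabla\widetilde\varphi)_H$: preserving coercivity of both dissipation terms after Young's absorption forces the $\ell^2$ (rather than $\ell$) weight on the second test function, and taming the stair-step remainders at order $h$ requires the $L^2(0,T;V)$ bounds on $\partial_t\widehat\theta_h, \partial_t\widehat\varphi_h$, which is precisely where the hypothesis $f \in W^{1,1}(0,T;H)$ is essentially used.
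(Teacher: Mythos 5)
Your overall architecture matches the paper's: subtract the equations, test the enthalpy residual against $E_h=\widehat{\theta}_h-\theta+\ell(\widehat{\varphi}_h-\varphi)$ and the phase residual against an $\ell^2$-weighted phase error, use monotonicity of $\beta$ to drop $(\overline{\xi}_h-\xi,\overline{\varphi}_h-\varphi)_H$, absorb the gradient cross-term with Young, apply Gronwall, and finally use $f\in W^{1,1}(0,T;H)$ only to get $\|\overline{f}_h-f\|_{L^2(0,T;H)}^2=O(h)$ (your bound $h\|f'\|_{L^1(0,T;H)}^2$ is correct and even slightly cleaner than the paper's). The paper organizes this as Lemma \ref{semierroresti} plus the short computation \eqref{fhf1}--\eqref{fhf2}, and whether one tests the phase residual with $\ell^2(\overline{\varphi}_h-\varphi)$ or with $\ell^2(\widehat{\varphi}_h-\varphi)$ is immaterial, since the discrepancy is the $O(h)$ term $\ell^2(\partial_t\widetilde\varphi,\overline{\varphi}_h-\widehat{\varphi}_h)_H$ that you already list.

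The genuine gap is your ``preliminary step.'' You claim uniform-in-$h$ bounds on $\partial_t\widehat{\theta}_h$ and $\partial_t\widehat{\varphi}_h$ in $L^2(0,T;V)$ by differencing consecutive discrete equations and testing with $\delta_h\theta_n$, $\delta_h\varphi_n$. Under (A1)--(A4) this cannot be started: the differenced-equation argument needs control of the first increments $\|\nabla\delta_h\theta_0\|_H$, $\|\nabla\delta_h\varphi_0\|_H$ (equivalently, compatibility of the initial data such as $\Delta\theta_0,\Delta\varphi_0\in H$ and a selection of $\beta(\varphi_0)$ in $H$), none of which is assumed; the theorem is stated with only $\theta_0,\varphi_0\in V$ and $\widehat{\beta}(\varphi_0)\in L^1(\Omega)$. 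Moreover the step is unnecessary. The only place you invoke it is to bound $\ell(\nabla(\overline{\theta}_h-\theta),\nabla(\widehat{\varphi}_h-\overline{\varphi}_h))_H$; but since $\overline{\theta}_h-\theta$ is uniformly bounded in $L^2(0,T;W)$ by Lemma \ref{Fourthesti}, you can integrate by parts and write this term as $\ell(-\Delta(\overline{\theta}_h-\theta),\widehat{\varphi}_h-\overline{\varphi}_h)_H$, which is $O(h)$ using only \eqref{rem6} and Lemma \ref{Firstesti} (i.e.\ $\|\widehat{\varphi}_h-\overline{\varphi}_h\|_{L^2(0,T;H)}=\tfrac{h}{\sqrt3}\|\partial_t\widehat{\varphi}_h\|_{L^2(0,T;H)}=O(h)$); the analogous terms from the phase equation are handled the same way with $\|\Delta(\overline{\varphi}_h-\varphi)\|_{L^2(0,T;H)}$. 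This is exactly what the paper does, and it also explains why $f\in W^{1,1}(0,T;H)$ enters only through $\|\overline{f}_h-f\|_{L^2(0,T;H)}$, not through any enhanced regularity of the discrete solution. Replace your preliminary step by this integration by parts and the rest of your argument goes through.
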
 
%%%%%%%%%%%%%%%%Theorem Error esti%%%%%%%%%%%%%%%%%%%
%%%%%%%%%%%%%%%%%%%%%%%%%%%%%%%%%%%%%%%%%%%%%%%%%%%

\vspace{10pt}

%%==============================================================%%
%%==============                                  ==============%%
%%======                      Section3                    ======%%
%%====                                                      ====%%
%%==                                                          ==%%
%%====        Existence of discrete solution         ====%%
%%======                                                  ======%%
%%==============                                  ==============%%
%%==============================================================%%
\section{Existence of discrete solution}\label{Sec3}

In this section we will prove Theorem \ref{maintheorem1}.

%===========Lem 3.1===========%
\begin{lem}\label{about a elliptic eq} 
For all $g\in H$ and all  
$h \in \left(0, \frac{1}{\|\pi'\|_{L^{\infty}(\mathbb{R})}}\right)$ 
there exists a unique solution $(\varphi, \xi)$ of the equation 
$\varphi - h\Delta\varphi + \xi + h\pi(\varphi) = g$, $\xi \in \beta(\varphi)$,  
satisfying $\varphi \in W$ and $\xi \in H$.  
\end{lem}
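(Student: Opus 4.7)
The plan is to recast the equation as an abstract fixed-point problem on $H$ using the theory of maximal monotone operators. Define the functional $F : H \to [0,+\infty]$ by
$$F(\varphi) := \frac{h}{2} \iO |\nabla\varphi|^2\,dx + h \iO \widehat{\beta}(\varphi)\,dx$$
if $\varphi \in V$ with $\widehat{\beta}(\varphi)\in L^1(\Omega)$, and $F(\varphi):=+\infty$ otherwise. Since $\widehat{\beta}(0)=0$ we have $F(0)=0$, so $F$ is proper; moreover, $F$ is convex and lower semicontinuous on $H$ (the first term is lsc thanks to the continuous inclusion $V\hookrightarrow H$, the second by Fatou's lemma), so its subdifferential $\partial F \subset H\times H$ is maximal monotone. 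By standard arguments based on elliptic regularity for the Neumann Laplacian, one has the characterization: $\eta \in \partial F(\varphi)$ if and only if $\varphi \in W$ and there exists $\xi \in H$ with $\xi \in \beta(\varphi)$ \aeO\ such that $\eta = h(-\Delta\varphi + \xi)$.

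With this in hand, the equation $\varphi - h\Delta\varphi + \xi + h\pi(\varphi) = g$, $\xi \in \beta(\varphi)$, is equivalent to the inclusion $\varphi + \partial F(\varphi) \ni g - h\pi(\varphi)$, that is, to the fixed-point equation $\varphi = (I + \partial F)^{-1}(g - h\pi(\varphi))$. Set $L := \|\pi'\|_{L^{\infty}(\mathbb{R})}$. Since $(I+\partial F)^{-1}$ is a nonexpansive mapping of $H$ into itself (being the resolvent of a maximal monotone operator on $H$), and $\pi$ is Lipschitz continuous on $\Rone$ with constant $L$, the operator
$$G : H \to H, \qquad G(\psi) := (I + \partial F)^{-1}\bigl(g - h\pi(\psi)\bigr),$$
is a strict contraction with Lipschitz constant $hL$. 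The assumption $h \in (0, 1/L)$ guarantees $hL < 1$, so Banach's contraction principle yields a unique fixed point $\varphi \in H$. Invoking the characterization of $\partial F$ above, this $\varphi$ belongs to $W$ and is paired with a unique $\xi \in H$ verifying $\xi \in \beta(\varphi)$ \aeO, and $(\varphi,\xi)$ solves the required equation.

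Uniqueness is already provided by the fixed-point argument, but can also be checked directly: subtracting two solutions $(\varphi_i,\xi_i)$ and testing with $\varphi_1-\varphi_2$, integration by parts (the Neumann boundary condition encoded in $W$ kills the boundary term) together with the monotonicity of $\beta$ and the Lipschitz estimate on $\pi$ yields
$$(1-hL)\|\varphi_1-\varphi_2\|_{H}^{2} + h\|\nabla(\varphi_1-\varphi_2)\|_{H}^{2} \le 0,$$
whence $\varphi_1=\varphi_2$ and then $\xi_1=\xi_2$ from the equation itself. The main obstacle is the precise identification of $D(\partial F)$ and the corresponding $H^2$-regularity of $\varphi$, since the standard bounded-domain arguments cannot be invoked verbatim on a general unbounded $\Omega$. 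However, the assumption that $\partial\Omega$ is smooth and bounded (or that $\Omega$ is $\RN$ or a half-space) allows a routine localization: in the interior and away from $\partial\Omega$ one uses $H^2$-estimates for the Laplacian on $\RN$, while near $\partial\Omega$ one uses classical boundary regularity in a bounded neighbourhood, and these patch together to give $\varphi\in W$ and $\xi\in H$.
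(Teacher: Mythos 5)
Your argument is correct, but it follows a genuinely different route from the paper. You package the Neumann Laplacian and $\beta$ together as the subdifferential of the convex functional $F(\varphi)=\frac h2\iO|\nabla\varphi|^2+h\iO\widehat\beta(\varphi)$ and then absorb the Lipschitz perturbation $h\pi$ by Banach's fixed point theorem, the condition $h\|\pi'\|_{L^\infty(\mathbb{R})}<1$ making $\psi\mapsto(I+\partial F)^{-1}(g-h\pi(\psi))$ a strict contraction on $H$; this delivers existence and uniqueness in one stroke and is arguably cleaner. The paper instead regularizes $\beta$ by its Yosida approximation $\beta_\ep$, applies the Browder--Minty surjectivity theorem to the monotone, coercive operator $1-h\Delta+h\beta_\ep+h\pi:V\to V^*$, derives $\ep$-uniform bounds on $\varphi_\ep$ in $W$ and on $\beta_\ep(\varphi_\ep)$ in $H$, and passes to the limit, identifying the equation and the inclusion $\xi\in\beta(\varphi)$ by testing against compactly supported functions and exploiting the compactness of $H^1(D)\hookrightarrow L^2(D)$ on bounded subdomains $D\subset\Omega$. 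The trade-off is that your entire difficulty is relocated into the asserted characterization $\partial F(\varphi)=\{h(-\Delta\varphi+\xi):\xi\in\beta(\varphi)\cap H\}$ with $D(\partial F)\subset W$: the inclusion $\supseteq$ is elementary, but the reverse inclusion (i.e., that the subdifferential of the sum does not exceed the sum of the subdifferentials, together with the $H^2$-regularity) is exactly the Yosida-approximation-plus-a-priori-estimates argument the paper writes out, resting on the sign condition $\iO\beta_\ep'(\varphi)|\nabla\varphi|^2\ge0$ and on elliptic regularity for the Neumann problem on a general, possibly unbounded, domain. You acknowledge this and sketch a localization; to make the proof self-contained you should either give that argument in detail or cite a precise statement (Br\'ezis' theorem on sums of maximal monotone operators, or the corresponding result in Barbu's monograph) and verify that its hypotheses survive the passage to unbounded $\Omega$. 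A minor point: your construction, like the paper's own proof, actually produces the equation with $h\xi$ rather than $\xi$ in front of the selection of $\beta(\varphi)$, which is the form used later in (Q)$_n$; this is only a relabeling of $\xi$ but is worth stating explicitly.
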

%===========Lem 3.1===========%
%=========== Proof of Lem 3.1===========%
\begin{proof}
Let $\ep>0$ and 
let $\beta_{\ep}$ be the Yosida approximation of $\beta$ on $\mathbb{R}$.  
Then the operator $1-h\Delta+h\beta_{\ep} + h\pi : V \to V^{*}$ 
is monotone, continuous and coercive 
for all $h \in \left(0, \frac{1}{\|\pi'\|_{L^{\infty}(\mathbb{R})}}\right)$. 
Indeed, we have 
\begin{align*}%\label{coercive}
\langle \psi - h\Delta\psi + h\beta_{\ep}(\psi) + h\pi(\psi), \psi \rangle_{V^{*}, V} 
&\geq (1-h\|\pi'\|_{L^{\infty}(R)})\|\psi\|_{H}^2 + h\|\nabla\psi\|_{H}^2 \\ \notag
&\geq \min\{1-h\|\pi'\|_{L^{\infty}(R)}, h\}\|\psi\|_{V}^2
\end{align*}
for all $\psi \in V$. 
Thus the operator $1-h\Delta+h\beta_{\ep} + h\pi : V \to V^{*}$ 
is surjective 
for all $h \in \left(0, \frac{1}{\|\pi'\|_{L^{\infty}(\mathbb{R})}}\right)$ 
(see e.g., \cite[p.\ 37]{Barbu}) 
and then the elliptic regularity theory yields that 
for all $g \in H$ 
and all $h \in \left(0, \frac{1}{\|\pi'\|_{L^{\infty}(\mathbb{R})}}\right)$
there exists a unique solution $\varphi_{\ep} \in W$ of the equation 
\begin{align}\label{ep elliptic eq}
\varphi_{\ep} - h\Delta\varphi_{\ep} 
+ h(\beta_{\ep}(\varphi_{\ep}) + \pi(\varphi_{\ep}))  = g.
\end{align}
Here, multiplying \eqref{ep elliptic eq} by $\varphi_{\ep}$ 
and integrating over $\Omega$, 
we obtain the inequality    
\begin{align*}
&\|\varphi_{\ep}\|_{H}^2 + h\|\nabla\varphi_{\ep}\|_{H}^2 
+ h(\beta_{\ep}(\varphi_{\ep}), \varphi_{\ep})_{H} 
\\ 
&= (g, \varphi_{\ep})_{H} - h(\pi(\varphi_{\ep}), \varphi_{\ep})_{H} \\ 
&\leq \frac{1}{2(1-h\|\pi'\|_{L^{\infty}(R)})}\|g\|_{H}^2 
    + \frac{1+h\|\pi'\|_{L^{\infty}(R)}}{2}\|\varphi_{\ep}\|_{H}^2 
\end{align*} 
for all $h \in \left(0, \frac{1}{\|\pi'\|_{L^{\infty}(\mathbb{R})}}\right)$, 
and hence 
for all $h \in \left(0, \frac{1}{\|\pi'\|_{L^{\infty}(\mathbb{R})}}\right)$ 
there exists a constant $C_1=C_1(h)>0$ such that 
\begin{align}\label{Vesti}
\|\varphi_{\ep}\|_{V} \leq C_1 
\end{align}
for all $\ep>0$.
We see from \eqref{ep elliptic eq} and \eqref{Vesti} that 
\begin{align*}
&\|\beta_{\ep}(\varphi_{\ep})\|_{H}^2 \\ 
&= (\beta_{\ep}(\varphi_{\ep}), \beta_{\ep}(\varphi_{\ep}))_{H} \\ 
&= -\frac{1}{h}(\beta_{\ep}(\varphi_{\ep}), \varphi_{\ep})_{H} 
   - \int_{\Omega} \beta_{\ep}'(\varphi_{\ep})|\nabla\varphi_{\ep}|^2 
   - (\pi(\varphi_{\ep}), \beta_{\ep}(\varphi_{\ep}))_{H} 
   +\frac{1}{h}(g, \beta_{\ep}(\varphi_{\ep}))_{H} \\
&\leq C_1^2\|\pi'\|_{L^{\infty}(\mathbb{R})}^2  
        + \frac{1}{h^2}\|g\|_{H}^2 + \frac{1}{2}\|\beta_{\ep}(\varphi_{\ep})\|_{H}^2  
\end{align*} 
for all $h \in \left(0, \frac{1}{\|\pi'\|_{L^{\infty}(\mathbb{R})}}\right)$. 
Hence 
for all $h \in \left(0, \frac{1}{\|\pi'\|_{L^{\infty}(\mathbb{R})}}\right)$ 
there exists a constant $C_{2}=C_{2}(h)>0$ such that 
\begin{align}\label{betaesti}
\|\beta_{\ep}(\varphi_{\ep})\|_{H} \leq C_2 
\end{align}
for all $\ep>0$. 
Moreover, \eqref{ep elliptic eq}-\eqref{betaesti} yield that 
\begin{align*}
\|\Delta \varphi_{\ep}\|_{H} 
&= \frac{1}{h}
          \|\varphi_{\ep} + h(\beta_{\ep}(\varphi_{\ep})+\pi(\varphi_{\ep}))-g\|_{H}  \\
&\leq \frac{C_1}{h} + C_2 + C_{1}\|\pi'\|_{L^{\infty}(\mathbb{R})} + \frac{1}{h}\|g\|_{H}
\end{align*} 
for all $h \in \left(0, \frac{1}{\|\pi'\|_{L^{\infty}(\mathbb{R})}}\right)$. 
Thus, 
by the elliptic regularity theory and \eqref{Vesti},   
for all $h \in \left(0, \frac{1}{\|\pi'\|_{L^{\infty}(\mathbb{R})}}\right)$
there exists a constant $C_{3}=C_{3}(h)>0$ such that 
\begin{align}\label{Lapesti}
\|\varphi_{\ep}\|_{W} \leq C_3 
\end{align} 
for all $\ep>0$. 
Therefore we infer from \eqref{betaesti} and \eqref{Lapesti} that 
there exist some functions \pier{$\varphi \in W$, $\xi \in H$ and a subsequence of 
$\ep$} such that 
\begin{align}
&\varphi_{\ep} \to \varphi \quad \mbox{weakly in}\ W, \label{weak1} \\ 
&\beta_{\ep}(\varphi_{\ep}) \to \xi  
                                             \quad \mbox{weakly in}\ H \label{weak2} 
\end{align}
as $\ep=\ep_j\searrow 0$. 
Now we confirm that 
\begin{align}\label{semisecondequation}
\varphi - h\Delta \varphi + h\xi + h\pi(\varphi) = g. 
\end{align}
Let $\psi \in C_{\mathrm c}^{\infty}(\overline{\Omega})$. 
Then there exists a bounded domain $D \subset \Omega$ with smooth boundary 
such that $\mbox{supp}\,\psi \subset D$ 
and it follows from \eqref{ep elliptic eq} that 
\begin{align}\label{e1}
0 &= \int_{\Omega} 
          (\varphi_{\ep} - h\Delta\varphi_{\ep} 
                        + h(\beta_{\ep}(\varphi_{\ep}) + \pi(\varphi_{\ep})) - g)\psi 
\\ \notag 
&= (\varphi_{\ep}-h\Delta\varphi_{\ep}+h\beta_{\ep}(\varphi_{\ep})-g, \psi)_{H} 
     + h\int_{D}\pi(\varphi_{\ep})\psi. 
\end{align}
Here, since the embedding $H^1(D)\hookrightarrow L^2(D)$ is compact, 
we deduce from \eqref{Vesti} and \eqref{weak1} that 
\begin{align}\label{strong}
\varphi_{\ep} \to \varphi \quad\mbox{strongly in}\ L^2(D) 
\end{align}
as $\ep=\ep_j\searrow 0$.  
Thus we derive from \eqref{weak1}, \eqref{weak2}, \eqref{e1} and \eqref{strong} 
that 
\begin{align*}
\int_{\Omega} 
       (\varphi - h\Delta\varphi + h\xi +h\pi(\varphi)  - g)\psi =0  
\end{align*}
for all $\psi \in C_{\mathrm c}^{\infty}(\overline{\Omega})$, 
which implies \eqref{semisecondequation}. 

Next we show that 
\begin{align}\label{xibeta}
\xi \in \beta(\varphi) \quad \mbox{a.e.\ on}\ \Omega. 
\end{align} 
Let $E \subset \Omega$ be an arbitrary bounded domain with smooth boundary. 
Then we have  
\begin{align}\label{strongE} 
1_{E}\varphi_{\ep} \to 1_{E}\varphi \quad \mbox{strongly in}\ H 
\end{align}
as $\ep=\ep_j\searrow 0$, 
where $1_{E}$ is the characteristic function on $E$.  
Hence we see from \eqref{weak2} and \eqref{strongE} that   
\begin{align*}
\int_{\Omega} \beta_{\ep}(1_{E}\varphi_{\ep})\cdot1_{E}\varphi_{\ep} 
= (\beta_{\ep}(\varphi_{\ep}), 1_{E}\varphi_{\ep})_{H} 
\to (\xi, 1_{E}\varphi)_{H} 
= \int_{\Omega} 1_{E}\xi\cdot1_{E}\varphi 
\end{align*}
as $\ep=\ep_j\searrow 0$, 
and \pier{consequently} it holds that $1_{E}\xi \in \beta(1_{E}\varphi)$ a.e.\ on $\Omega$ 
(see e.g., \cite[Lemma 1.3, p.\ 42]{Barbu1}), 
that is, 
$$
\xi = 1_{E}\xi \in \beta(1_{E}\varphi) = \beta(\varphi) \quad \mbox{a.e.\ on}\ E. 
$$
Thus, since $E$ is arbitrary, we can obtain \eqref{xibeta}. 

Therefore combining \eqref{semisecondequation} and \eqref{xibeta} 
leads to the equation 
$\varphi - h \Delta\varphi + h(\xi + \pi(\varphi)) = g$, \pier{with} $\xi \in \beta(\varphi)$. 
Moreover, the solution \pier{$ (\varphi, \xi)$ of this problem} is unique. 
Indeed, letting $(\varphi_j, \xi_j)$, $j=1, 2$, be two solutions,   
we infer that 
\begin{align*}
0 &= \langle \varphi_1 - \varphi_2 - h\Delta(\varphi_1 - \varphi_2) 
                             + h(\xi_1 - \xi_2) + h(\pi(\varphi_1) - \pi(\varphi_2)), 
                                                             \varphi_1 - \varphi_2 \rangle_{V^{*}, V} 
\\ \notag 
&\geq (1-h\|\pi'\|_{L^{\infty}(R)})\|\varphi_1 - \varphi_2\|_{H}^2 
         + h\|\nabla(\varphi_1 - \varphi_2)\|_{H}^2 \\ \notag
&\geq \min\{1-h\|\pi'\|_{L^{\infty}(R)}, h\}\|\varphi_1 - \varphi_2\|_{V}^2,  
\end{align*}
which means that $\varphi_1 = \varphi_2$. 
Then the identity $\xi_1 = \xi_2$ holds 
by comparing the equations for $(\varphi_1, \xi_1)$ and $(\varphi_2, \xi_2)$. 
\end{proof}
%=========== Proof of Lem 3.1===========%

\begin{prth2.1}
The problem \ref{Pn} can be written as 
\begin{equation*}\tag*{(Q)$_{n}$}\label{Qn}
     \begin{cases}
         \theta_{n+1} - h\Delta\theta_{n+1} 
         = hf_{n+1} + \ell \varphi_{n} - \ell\varphi_{n+1} + \theta_n, 
     \\[2mm] 
         \varphi_{n+1} - h\Delta\varphi_{n+1} + h(\xi_{n+1} + \pi(\varphi_{n+1})) 
         = \varphi_n + h\ell\theta_n,\ \xi_{n+1} \in \beta(\varphi_{n+1}).  
     \end{cases}
 \end{equation*}
To prove Theorem \ref{maintheorem1} 
it suffices to establish existence and uniqueness of solutions to \ref{Qn} 
in the case that $n=0$. 
Let $h \in \left(0, \frac{1}{\|\pi'\|_{L^{\infty}(\mathbb{R})}}\right)$. 
Then, by Lemma \ref{about a elliptic eq},   
there exists a unique solution $(\varphi_1, \xi_1)$ of % the equation 
$$
\varphi_1 - h\Delta \varphi_1 + h(\xi_1 + \pi(\varphi_1)) 
= \varphi_0 + \ell h\theta_0 ,\ \xi_1 \in \beta(\varphi_1),   
$$
satisfying $\varphi_1 \in W$ and $\xi_1 \in H$.  
Also, for this function $\varphi_1$ there exists a unique solution $\theta_1\in W$ 
of the equation 
$$
\theta_1 - h\Delta\theta_1 
         = hf_1 + \ell \varphi_{0} - \ell\varphi_1 + \theta_0. 
$$ 
Thus we conclude that 
there exists a unique solution $(\theta_{n+1}, \varphi_{n+1}, \xi_{n+1})$ 
of {\rm \ref{Pn}} 
satisfying $\theta_{n+1}, \varphi_{n+1} \in W$ and $\xi_{n+1} \in H$ for $n=0, ..., N-1$. 
\end{prth2.1}

\vspace{10pt}
 
%%==============================================================%%
%%==============                                  ==============%%
%%======                      Section4                    ======%%
%%====                                                      ====%%
%%==                                                          ==%%
%%==== Uniform estimates and passage to the limit  ====%%
%%======                                                  ======%%
%%==============                                  ==============%%
%%==============================================================%%
\section{Uniform estimates and passage to the limit}\label{Sec4}
 
This section will prove Theorem \ref{maintheorem2}. 
We will \pier{derive} a priori estimates for \ref{Ph} 
to establish existence for \ref{P} by passing to the limit in \ref{Ph}. 

%===========Lem 4.1===========%
\begin{lem}\label{Firstesti}  
There exists 
$h_1 \in \left(0, \min\left\{1, \frac{1}{\|\pi'\|_{L^{\infty}(\mathbb{R})}}\right\}\right)$  
such that 
for all $\ell>0$ there \pier{is} a constant $C=C(\ell, T)>0$ such that 
\begin{align*}
&\|\overline{\theta}_h\|_{L^{\infty}(0, T; H)}^2 
 + \ell^2\|\overline{\varphi}_h\|_{L^{\infty}(0, T; V)}^2 
 + h\|\partial_t \widehat{\theta}_h\|_{L^2(0, T; H)}^2 \\ \notag 
 &+ \ell^2\|\partial_t \widehat{\varphi}_h\|_{L^2(0, T; H)}^2 
 + \|\overline{\theta}_h\|_{L^2(0, T; V)}^2 
+ \ell^2\|\widehat{\beta}(\overline{\varphi}_h)\|_{L^{\infty}(0, T; L^1(\Omega))}  
\leq C                    
\end{align*}
for all $h \in (0, h_1)$.
\end{lem}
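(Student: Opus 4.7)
I would follow the natural discrete analogue of the formal continuous energy identity---obtained by multiplying the $\theta$-equation by $\theta$ and the $\varphi$-equation by $\ell^{2}\partial_{t}\varphi$ and adding. Concretely, I test the first equation of \ref{Pn} in $H$ by $h\theta_{n+1}$ and the second by $h\ell^{2}\delta_{h}\varphi_{n}$, sum the two scalar identities, and then sum the result over $n=0,\dots,m-1$ for an arbitrary $m\in\{1,\dots,N\}$. Applying the discrete chain-rule identity $h(\delta_{h}\theta_{n},\theta_{n+1})_{H}=\frac{1}{2}\|\theta_{n+1}\|_{H}^{2}-\frac{1}{2}\|\theta_{n}\|_{H}^{2}+\frac{h^{2}}{2}\|\delta_{h}\theta_{n}\|_{H}^{2}$ and its analogue for $(\nabla\delta_{h}\varphi_{n},\nabla\varphi_{n+1})_{H}$, together with the convexity inequality $h(\xi_{n+1},\delta_{h}\varphi_{n})_{H}\geq\int_{\Omega}\widehat{\beta}(\varphi_{n+1})-\int_{\Omega}\widehat{\beta}(\varphi_{n})$ (valid since $\xi_{n+1}\in\partial\widehat{\beta}(\varphi_{n+1})$ by (A1)), the telescoping part of the left-hand side produces exactly the positive quantities required by the lemma: $\frac{1}{2}\|\theta_{m}\|_{H}^{2}$, $\frac{\ell^{2}}{2}\|\nabla\varphi_{m}\|_{H}^{2}$, $\ell^{2}\int_{\Omega}\widehat{\beta}(\varphi_{m})$, together with the dissipative sums $h\sum_{n<m}\|\nabla\theta_{n+1}\|_{H}^{2}$, $\frac{h^{2}}{2}\sum_{n<m}\|\delta_{h}\theta_{n}\|_{H}^{2}$ and $h\ell^{2}\sum_{n<m}\|\delta_{h}\varphi_{n}\|_{H}^{2}$.

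The delicate step is the treatment of the two coupling contributions $\ell h(\delta_{h}\varphi_{n},\theta_{n+1})_{H}$ (appearing on the LHS from the first test) and $\ell^{3}h(\theta_{n},\delta_{h}\varphi_{n})_{H}$ (appearing on the RHS from the second test). In the continuous setting these combine into a perfect cancellation, but the time-index mismatch destroys the exact cancellation in the discrete case, and a naive algebraic manipulation via $\ell h(\delta_{h}\varphi_{n},\theta_{n+1}-\ell^{2}\theta_{n})=\ell h^{2}(\delta_{h}\varphi_{n},\delta_{h}\theta_{n})+\ell(1-\ell^{2})h(\delta_{h}\varphi_{n},\theta_{n})$ would force an $\ell$-dependent smallness of $h$. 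I instead apply Young's inequality \emph{directly} to each of the two coupling terms---e.g.\ $|\ell h(\delta_{h}\varphi_{n},\theta_{n+1})_{H}|\leq\frac{h\ell^{2}}{8}\|\delta_{h}\varphi_{n}\|_{H}^{2}+2h\|\theta_{n+1}\|_{H}^{2}$ and analogously for the other---absorbing the resulting $\|\delta_{h}\varphi_{n}\|_{H}^{2}$-pieces into the $h\ell^{2}\|\delta_{h}\varphi_{n}\|_{H}^{2}$ term already present on the LHS (whose coefficient drops to roughly $\frac{h\ell^{2}}{2}$), and sending the residual $h\|\theta_{n+1}\|_{H}^{2}$ and $h\ell^{4}\|\theta_{n}\|_{H}^{2}$ to the right-hand side. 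The Lipschitz term $h\ell^{2}(\pi(\varphi_{n+1}),\delta_{h}\varphi_{n})_{H}$ is handled analogously via $|\pi(r)|\leq\|\pi'\|_{L^{\infty}(\mathbb{R})}|r|$, and the source term is estimated through $\sum_{n}h\|f_{n+1}\|_{H}^{2}\leq\|f\|_{L^{2}(0,T;H)}^{2}$ (Jensen).

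To close the inequality by discrete Gronwall I work with the quantity $\tilde{A}_{m}:=\|\theta_{m}\|_{H}^{2}+\ell^{2}\|\varphi_{m}\|_{V}^{2}+2\ell^{2}\int_{\Omega}\widehat{\beta}(\varphi_{m})$. The only piece missing from the LHS is the $H$-part $\ell^{2}\|\varphi_{m}\|_{H}^{2}$, which I would generate either by supplementing the energy with the auxiliary test of the second equation by $h\ell^{2}\varphi_{n+1}$ or by inserting the elementary bound $\|\varphi_{m}\|_{H}^{2}\leq 2\|\varphi_{0}\|_{H}^{2}+2T\,h\sum_{k<m}\|\delta_{h}\varphi_{k}\|_{H}^{2}$ into the Gronwall step. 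One arrives at an inequality of the form $\tilde{A}_{m}\leq C_{0}+C_{1}h\sum_{n\leq m}\tilde{A}_{n}$, with $C_{0}$ depending on the data and $C_{1}$ on $\ell$ and $\|\pi'\|_{L^{\infty}(\mathbb{R})}$ but not on $h$ or $m$; the discrete Gronwall lemma then yields $\tilde{A}_{m}\leq C(\ell,T)$ uniformly in $m$, and translation to the piecewise interpolants via \eqref{hat}--\eqref{overandunderline} and the relations \eqref{rem1}--\eqref{rem6} produces the six norms of the statement. The main obstacle in the whole argument is, as flagged above, to keep the threshold $h_{1}$ independent of $\ell$: this is achieved precisely by the $\ell^{2}$-weighted test function and by the direct (rather than algebraic) Young estimation of the coupling terms, which permits a uniform choice such as $h_{1}:=\min\{1,1/\|\pi'\|_{L^{\infty}(\mathbb{R})}\}$, with all $\ell$-dependence confined to the Gronwall constant.
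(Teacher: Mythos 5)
Your proposal matches the paper's proof in all essentials: the same two test functions $h\theta_{n+1}$ and $h\ell^{2}\delta_{h}\varphi_{n}$, the same discrete chain-rule identities and subdifferential inequality for $\widehat{\beta}$, the same direct Young treatment of the two coupling terms so that the smallness condition on $h$ involves only $\|\pi'\|_{L^{\infty}(\mathbb{R})}$ and not $\ell$, and the same discrete Gronwall closure (the paper generates the missing $H$-part of $\|\varphi_{m}\|_{V}^{2}$ by adding and subtracting $\varphi_{n+1}$ in the second test to form $(\varphi_{n+1},\varphi_{n+1}-\varphi_{n})_{V}$, which is in the same spirit as your two suggested devices). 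The only adjustment is that $h_{1}$ must be taken strictly below $\min\{1,1/\|\pi'\|_{L^{\infty}(\mathbb{R})}\}$ --- the paper uses $h_{1}=\frac{1}{4(\|\pi'\|_{L^{\infty}(\mathbb{R})}^{2}+1)}$ --- so as to absorb the top-index terms such as $\frac{3}{2}h\|\theta_{m}\|_{H}^{2}$ into the left-hand side, but this threshold is still $\ell$-independent as you require.
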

%===========Lem 4.1===========%
%=========== Proof of Lem 4.1===========%
\begin{proof}
Please note that, 
provided $h \in \left(0, \frac{1}{\|\pi'\|_{L^{\infty}(\mathbb{R})}}\right)$, 
then the solution $(\theta_{n+1}, \varphi_{n+1}, \xi_{n+1})$ to \ref{Pn} 
is well-defined due to Theorem \ref{maintheorem1}. 
Multiplying the first equation in \ref{Pn} by $h\theta_{n+1}$ and  
integrating over $\Omega$, 
we have 
\begin{align}\label{f1}
&\frac{1}{2}\int_{\Omega}|\theta_{n+1}|^2 - \frac{1}{2}\int_{\Omega}|\theta_n|^2 
+ \frac{1}{2}\int_{\Omega}|\theta_{n+1}-\theta_n|^2 
+h\int_{\Omega}|\nabla\theta_{n+1}|^2 
\\ \notag 
&=h\int_{\Omega}f_{n+1}\theta_{n+1} 
    + \ell h\int_{\Omega}\frac{\varphi_n-\varphi_{n+1}}{h}\theta_{n+1}, 
\end{align}
where the identity 
$(a-b)a = \frac{1}{2}a^2-\frac{1}{2}b^2+\frac{1}{2}(a-b)^2$ ($a, b \in \mathbb{R}$) 
was applied.  
\pier{Multiplying the second equation in \ref{Pn} by $\ell^2(\varphi_{n+1}-\varphi_n)$ 
and integrating over $\Omega$ lead to} 
\begin{align}\label{f2}
&\ell^2h\int_{\Omega}\left|\frac{\varphi_{n+1}-\varphi_n}{h}\right|^2 
+ \ell^2(\varphi_{n+1}, \varphi_{n+1}-\varphi_n)_{V}  
\\ \notag 
&+ \ell^2
     \int_{\Omega}(\xi_{n+1}+\pi(\varphi_{n+1})-\varphi_{n+1})(\varphi_{n+1}-\varphi_n)
= \ell^3 h\int_{\Omega}\theta_n \frac{\varphi_{n+1}-\varphi_n}{h}. 
\end{align}
Thus it follows from \eqref{f1}, \eqref{f2} and the Young inequality that 
\begin{align}\label{f3}
&\frac{1}{2}\int_{\Omega}|\theta_{n+1}|^2 - \frac{1}{2}\int_{\Omega}|\theta_n|^2 
+ \frac{1}{2}\int_{\Omega}|\theta_{n+1}-\theta_n|^2 
+ h\int_{\Omega}|\nabla\theta_{n+1}|^2 
\\ \notag 
&+ \ell^2 h\int_{\Omega}\left|\frac{\varphi_{n+1}-\varphi_n}{h}\right|^2 
+ \ell^2 (\varphi_{n+1}, \varphi_{n+1}-\varphi_n)_{V}  
\\ \notag 
&+ \ell^2 
     \int_{\Omega}(\xi_{n+1}+\pi(\varphi_{n+1})-\varphi_{n+1})(\varphi_{n+1}-\varphi_n)
\\ \notag 
&= h\int_{\Omega}f_{n+1}\theta_{n+1} 
    + \ell h\int_{\Omega}\frac{\varphi_n-\varphi_{n+1}}{h}\theta_{n+1} 
    + \ell^3 h\int_{\Omega}\theta_n \frac{\varphi_{n+1}-\varphi_n}{h} 
\\ \notag 
&\leq \frac{h}{2}\int_{\Omega}|f_{n+1}|^2 
         + \frac{3}{2}h\int_{\Omega}|\theta_{n+1}|^2 
         + \frac{\ell^2 h}{2}
                           \int_{\Omega}\left|\frac{\varphi_{n+1}-\varphi_n}{h}\right|^2 
         + h\ell^4\int_{\Omega}|\theta_n|^2.  
\end{align} 
Here we \pier{point out} the identity    
\begin{align}\label{f4}
\ell^2 (\varphi_{n+1}, \varphi_{n+1}-\varphi_n)_{V}  
= \frac{\ell^2 }{2}\|\varphi_{n+1}\|_{V}^2 - \frac{\ell^2 }{2}\|\varphi_n\|_{V}^2 
   + \frac{\ell^2 }{2}\|\varphi_{n+1}-\varphi_n\|_{V}^2 
\end{align} 
and 
\pier{recall} the inclusion $\xi_{n+1} \in \beta(\varphi_{n+1})$, (A1), 
the definition of the subdifferential, and the Young inequality \pier{in order to infer} that  
\begin{align}\label{f5}
&\ell^2 
     \int_{\Omega}(\xi_{n+1}+\pi(\varphi_{n+1})-\varphi_{n+1})(\varphi_{n+1}-\varphi_n)
\\ \notag
&= \ell^2 \int_{\Omega}\xi_{n+1}(\varphi_{n+1}-\varphi_n) 
   + \ell^2 \int_{\Omega}\pi(\varphi_{n+1})(\varphi_{n+1}-\varphi_n) 
   - \ell^2 \int_{\Omega}\varphi_{n+1}(\varphi_{n+1}-\varphi_n) 
\\ \notag
&\geq \ell^2 \int_{\Omega}\widehat{\beta}(\varphi_{n+1}) 
         - \ell^2 \int_{\Omega}\widehat{\beta}(\varphi_n) 
         - 2(\|\pi'\|_{L^{\infty}(\mathbb{R})}^2+1)\ell^2 h \|\varphi_{n+1}\|_{V}^2
\\ \notag 
&\,\quad-\frac{\ell^2 h}{4}
                        \int_{\Omega}\left|\frac{\varphi_{n+1}-\varphi_n}{h}\right|^2. 
\end{align}
Thus from \eqref{f3}-\eqref{f5} we have 
\begin{align}\label{f6}
&\frac{1}{2}\int_{\Omega}|\theta_{n+1}|^2 - \frac{1}{2}\int_{\Omega}|\theta_n|^2 
+ \frac{1}{2}\int_{\Omega}|\theta_{n+1}-\theta_n|^2 
+h\int_{\Omega}|\nabla\theta_{n+1}|^2 
\\ \notag 
&+ \frac{\ell^2 h}{4}\int_{\Omega}\left|\frac{\varphi_{n+1}-\varphi_n}{h}\right|^2 
   + \frac{\ell^2}{2}\|\varphi_{n+1}\|_{V}^2 - \frac{\ell^2}{2}\|\varphi_n\|_{V}^2 
\\ \notag
&+ \frac{\ell^2}{2}\|\varphi_{n+1}-\varphi_n\|_{V}^2 
  + \ell^2 \int_{\Omega}\widehat{\beta}(\varphi_{n+1}) 
  - \ell^2 \int_{\Omega}\widehat{\beta}(\varphi_n) 
\\ \notag 
&\leq \frac{h}{2}\int_{\Omega}|f_{n+1}|^2 
         + \frac{3}{2}h\int_{\Omega}|\theta_{n+1}|^2 
         + h\ell^4 \int_{\Omega}|\theta_n|^2 
+ 2(\|\pi'\|_{L^{\infty}(\mathbb{R})}^2+1)\ell^2 h \|\varphi_{n+1}\|_{V}^2. 
\end{align}
\pier{Therefore}, summing \eqref{f6} over $n=0, ..., m-1$ with $1\leq m \leq N$, 
we obtain the inequality   
\begin{align*}
&\frac{1}{2}(1-3h)\int_{\Omega}|\theta_m|^2 
- \frac{1}{2}\int_{\Omega}|\theta_0|^2 
+ \frac{1}{2}\sum_{n=0}^{m-1} \int_{\Omega}|\theta_{n+1}-\theta_n|^2 
+ h\sum_{n=0}^{m-1} \int_{\Omega}|\nabla\theta_{n+1}|^2 
\\ \notag 
&+ \frac{\ell^2 h}{4}\sum_{n=0}^{m-1} \int_{\Omega}
                                                  \left|\frac{\varphi_{n+1}-\varphi_n}{h}\right|^2 
+ \frac{\ell^2}{2}(1 - 4(\|\pi'\|_{L^{\infty}(\mathbb{R})}^2+1)h)\|\varphi_m\|_{V}^2 
- \frac{\ell^2}{2}\|\varphi_0\|_{V}^2 
\\ \notag 
&+ \frac{\ell^2}{2}\sum_{n=0}^{m-1}\|\varphi_{n+1}-\varphi_n\|_{V}^2 
           + \ell^2 \int_{\Omega}\widehat{\beta}(\varphi_m) 
           - \ell^2 \int_{\Omega}\widehat{\beta}(\varphi_0) 
\\ \notag 
&\leq \frac{h}{2}\sum_{n=0}^{m-1}\int_{\Omega}|f_{n+1}|^2 
         + \frac{3}{2}h\sum_{n=0}^{m-2}\int_{\Omega}|\theta_{n+1}|^2 
\\ \notag
         &\,\quad+ h\ell^4\sum_{n=0}^{m-1}\int_{\Omega}|\theta_n|^2 
         + 2(\|\pi'\|_{L^{\infty}(\mathbb{R})}^2+1)\ell^2 h
                                               \sum_{n=0}^{m-2}\|\varphi_{n+1}\|_{V}^2.  
\end{align*}
\pier{Then} there exists 
$h_1 \in \left(0, \min\left\{1, \frac{1}{\|\pi'\|_{L^{\infty}(\mathbb{R})}}\right\}\right)$ 
(e.g., $h_1 = \frac{1}{4(\|\pi'\|_{L^{\infty}(\mathbb{R})}^2+1)}$) 
such that 
for all $\ell > 0$ 
there exists a constant $C_1 = C_{1}(\ell, T) > 0$ such that 
\begin{align*}
&\int_{\Omega}|\theta_m|^2 
+ h^2\sum_{n=0}^{m-1} \int_{\Omega}|\delta\theta_n|^2 
+ h\sum_{n=0}^{m-1} \int_{\Omega}|\nabla\theta_{n+1}|^2 
+ \ell^2 h\sum_{n=0}^{m-1} \int_{\Omega}|\delta\varphi_n|^2 
+ \ell^2 \|\varphi_m\|_{V}^2 
\\ \notag 
&+ \ell^2 \int_{\Omega}\widehat{\beta}(\varphi_m) 
\leq C_1 + C_1 h\sum_{n=0}^{m-1}\int_{\Omega}|\theta_j|^2 
         + C_1 h \sum_{n=0}^{m-1}\|\varphi_j\|_{V}^2  
\end{align*}
for all $h \in (0, h_1)$ and $m=1,..., N$.  
Hence we see from 
the discrete Gronwall lemma (see e.g., \cite[Prop.\ 2.2.1]{Jerome}) that  
for all $\ell > 0$ 
there exists a constant $C_2=C_2(\ell, T)>0$ 
such that 
\begin{align*}
&\int_{\Omega}|\theta_m|^2 
+ h^2\sum_{n=0}^{m-1} \int_{\Omega}|\delta\theta_n|^2 
+ h\sum_{n=0}^{m-1} \int_{\Omega}|\nabla\theta_{n+1}|^2 
\\ \notag 
&+ \ell^2 h\sum_{n=0}^{m-1} \int_{\Omega}|\delta\varphi_n|^2 
+ \ell^2 \|\varphi_m\|_{V}^2 
+ \ell^2 \int_{\Omega}\widehat{\beta}(\varphi_m)  
\leq C_2
\end{align*}
for all $h \in (0, h_1)$ and $m=1, ..., N$.
\end{proof}
%=========== Proof of Lem 4.1===========% 

%===========Lem 4.2===========%
\begin{lem}\label{Secondesti}
Let $h_1$ be as in Lemma \ref{Firstesti}. 
Then for all $\ell > 0$ 
there exists a constant $C=C(\ell, T)>0$ such that 
\begin{align*}
 \|\partial_t \widehat{\theta}_h\|_{L^2(0, T; H)}^2 
 + \|\overline{\theta}_h\|_{L^{\infty}(0, T; V)}^2
 \leq C                    
\end{align*}
for all $h \in (0, h_1)$. 
\end{lem}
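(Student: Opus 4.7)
The plan is to derive the improved regularity estimate by testing the first discrete equation against the finite difference $\theta_{n+1}-\theta_n$, which will simultaneously produce a full $L^2$-bound in time for $\partial_t\widehat\theta_h$ (removing the factor $h$ present in Lemma \ref{Firstesti}) and a bounded-in-$n$ quantity $\|\nabla\theta_{n+1}\|_H^2$ through a discrete chain rule.

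First I would multiply the first equation of \ref{Pn}, namely
$\delta_h\theta_n + \ell\delta_h\varphi_n - \Delta\theta_{n+1} = f_{n+1}$,
by $h\delta_h\theta_n = \theta_{n+1}-\theta_n$ and integrate over $\Omega$. Using the Neumann boundary condition (valid because $\theta_{n+1}\in W$ by Theorem \ref{maintheorem1}), the Laplacian term yields
\begin{equation*}
\int_\Omega(-\Delta\theta_{n+1})(\theta_{n+1}-\theta_n)
= \tfrac12\|\nabla\theta_{n+1}\|_H^2 - \tfrac12\|\nabla\theta_n\|_H^2 + \tfrac12\|\nabla(\theta_{n+1}-\theta_n)\|_H^2,
\end{equation*}
while the cross and source terms are controlled by Young's inequality:
\begin{equation*}
\ell h(\delta_h\varphi_n,\delta_h\theta_n)_H + h(f_{n+1},\delta_h\theta_n)_H \le \tfrac{h}{2}\|\delta_h\theta_n\|_H^2 + \ell^2 h\|\delta_h\varphi_n\|_H^2 + h\|f_{n+1}\|_H^2.
\end{equation*}

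Next I would sum over $n=0,\ldots,m-1$ for any $1\le m\le N$, obtaining
\begin{equation*}
\tfrac12 \sum_{n=0}^{m-1} h\|\delta_h\theta_n\|_H^2 + \tfrac12\|\nabla\theta_m\|_H^2 \le \tfrac12\|\nabla\theta_0\|_H^2 + \ell^2 \sum_{n=0}^{m-1} h\|\delta_h\varphi_n\|_H^2 + \sum_{n=0}^{m-1} h\|f_{n+1}\|_H^2 .
\end{equation*}
The three terms on the right-hand side are all uniformly bounded: $\|\nabla\theta_0\|_H$ by (A4); the sum in $\delta_h\varphi_n$ by $\ell^2\|\partial_t\widehat{\varphi}_h\|_{L^2(0,T;H)}^2$, which is controlled by Lemma \ref{Firstesti}; and the sum in $f_{n+1}$ by $\|f\|_{L^2(0,T;H)}^2$, via the Cauchy--Schwarz estimate $|f_{n+1}|^2\le \frac{1}{h}\int_{nh}^{(n+1)h}|f(s)|^2\,ds$. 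Rewriting the first sum in terms of $\partial_t\widehat\theta_h$ according to \eqref{hat} yields the claimed bound $\|\partial_t\widehat\theta_h\|_{L^2(0,T;H)}^2\le C$.

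Finally, since the same inequality gives $\|\nabla\theta_m\|_H^2\le C'$ uniformly in $m$, combining with the uniform $L^\infty(0,T;H)$-bound on $\overline\theta_h$ already established in Lemma \ref{Firstesti} produces $\|\overline\theta_h\|_{L^\infty(0,T;V)}^2\le C$, completing the proof. I do not expect any serious obstacle here: no Gronwall argument is required because every term appearing on the right-hand side after summation is controlled \emph{a priori} by the previous lemma and the data assumptions (A3)--(A4); the only mild technical point is the use of the discrete chain-rule identity $(a-b)a=\tfrac12 a^2-\tfrac12 b^2+\tfrac12(a-b)^2$ together with an integration by parts that is legitimate thanks to the $W$-regularity of $\theta_{n+1}$.
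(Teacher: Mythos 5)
Your proposal is correct and follows essentially the same route as the paper: testing the first discrete equation with $h\delta_h\theta_n$, applying the discrete chain rule to the gradient term, absorbing the cross and source terms by Young's inequality, summing over $n$, and invoking Lemma \ref{Firstesti} together with (A3)--(A4) to bound the right-hand side without any Gronwall step. No gaps.
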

%===========Lem 4.2===========%
%=========== Proof of Lem 4.2===========%
\begin{proof}
Multiplying the first equation in \ref{Pn} by $h\delta_h \theta_n$ and 
integrating over $\Omega$, we have 
\begin{align}\label{s1}
h\int_{\Omega}|\delta_h \theta_n|^2 
+ \ell h \int_{\Omega} \delta_h \varphi_n\cdot\delta_h \theta_n 
+ h\int_{\Omega}\nabla\theta_{n+1}\cdot\nabla\delta_h\theta_n 
= h\int_{\Omega}f_{n+1}\delta_h \theta_n. 
\end{align}
Here it holds that 
\begin{align}\label{s2}
h\int_{\Omega}\nabla\theta_{n+1}\cdot\nabla\delta_h\theta_n  
= \frac{1}{2}\int_{\Omega}|\nabla\theta_{n+1}|^2 
   - \frac{1}{2}\int_{\Omega}|\nabla\theta_n|^2 
   + \frac{1}{2}\int_{\Omega}|\nabla(\theta_{n+1}-\theta_n)|^2. 
\end{align}
Thus we see from \eqref{s1}, \eqref{s2} and the Young inequality that  
\begin{align}\label{s3}
&h\int_{\Omega}|\delta_h \theta_n|^2 
+ \frac{1}{2}\int_{\Omega}|\nabla\theta_{n+1}|^2 
   - \frac{1}{2}\int_{\Omega}|\nabla\theta_n|^2 
   + \frac{1}{2}\int_{\Omega}|\nabla(\theta_{n+1}-\theta_n)|^2 
\\ \notag 
&= h\int_{\Omega}f_{n+1}\delta_h \theta_n 
     - \ell h \int_{\Omega} \delta_h \varphi_n\cdot\delta_h \theta_n 
\\ \notag
&\leq h\int_{\Omega}|f_{n+1}|^2 + \frac{h}{2}\int_{\Omega}|\delta_h \theta_n|^2 
         + \ell^2 h \int_{\Omega} |\delta_h \varphi_n|^2.  
\end{align}
Therefore, by summing \eqref{s3} over $n=0, ..., m-1$ with $1\leq m \leq N$  
and Lemma \ref{Firstesti}, we can prove Lemma \ref{Secondesti}. 
\end{proof}
%=========== Proof of Lem 4.2===========%

%===========Lem 4.3===========%
\begin{lem}\label{Thirdesti}
Let $h_1$ be as in Lemma \ref{Firstesti}. 
Then for all $\ell > 0$
there exists a constant $C=C(\ell, T)>0$ such that 
$$
\|\overline{\xi}_h\|_{L^2(0, T; H)}^2 \leq C
$$ 
for all $h \in (0, h_1)$. 
\end{lem}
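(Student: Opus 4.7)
The plan is to test the second equation of \ref{Pn} against $\xi_{n+1}$ itself (rather than $\delta_h \varphi_n$ or $\varphi_{n+1}-\varphi_n$ as in the previous lemmas), so that the squared $L^2$-norm of $\xi_{n+1}$ appears directly, and to control the cross term with $-\Delta\varphi_{n+1}$ through the monotonicity of $\beta$. More precisely, multiplying the second equation of \ref{Pn} by $h\xi_{n+1}$ and integrating over $\Omega$ produces
\begin{equation*}
h\int_{\Omega}\delta_h \varphi_n\,\xi_{n+1} \,-\, h\int_{\Omega}\Delta\varphi_{n+1}\,\xi_{n+1} \,+\, h\|\xi_{n+1}\|_{H}^2 \,+\, h\int_{\Omega}\pi(\varphi_{n+1})\,\xi_{n+1} \,=\, \ell h\int_{\Omega}\theta_n\,\xi_{n+1}.
\end{equation*}

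The crucial point is the sign of the term $-\int_{\Omega}\Delta\varphi_{n+1}\,\xi_{n+1}$. Since $\varphi_{n+1}\in W$ satisfies the homogeneous Neumann condition and $\xi_{n+1}\in\beta(\varphi_{n+1})$ with $\beta=\partial\widehat\beta$ maximal monotone, this integral is nonnegative. I would justify this by returning to the Yosida approximation from Lemma \ref{about a elliptic eq}: at the $\ep$-level $\xi_{n+1,\ep}=\beta_\ep(\varphi_{n+1,\ep})$ is Lipschitz, so integration by parts gives
$$
-\int_{\Omega}\Delta\varphi_{n+1,\ep}\,\beta_\ep(\varphi_{n+1,\ep}) \,=\, \int_{\Omega}\beta_\ep'(\varphi_{n+1,\ep})|\nabla\varphi_{n+1,\ep}|^2 \,\geq\, 0,
$$
and the nonnegativity is preserved in the limit by weak lower semicontinuity, using the convergences \eqref{weak1}--\eqref{weak2} together with the strong $L^2$ convergence of $\varphi_{n+1,\ep}$ on bounded subdomains.

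Once this sign is in hand, I would drop the Laplacian term and apply Young's inequality to the remaining three products on the right, each time absorbing a small fraction of $h\|\xi_{n+1}\|_{H}^2$ into the left-hand side. This yields
$$
\tfrac{h}{2}\|\xi_{n+1}\|_{H}^2 \,\leq\, C\,h\!\left(\|\delta_h\varphi_n\|_{H}^2 + \|\pi(\varphi_{n+1})\|_{H}^2 + \ell^2\|\theta_n\|_{H}^2\right),
$$
with $C$ independent of $h$ and $n$. Summing over $n=0,\dots,N-1$ and rewriting the sums via the notation \eqref{hat}--\eqref{overandunderline} gives
$$
\tfrac{1}{2}\|\overline{\xi}_h\|_{L^2(0,T;H)}^2 \,\leq\, C\left(\|\partial_t\widehat\varphi_h\|_{L^2(0,T;H)}^2 + \|\pi(\overline\varphi_h)\|_{L^2(0,T;H)}^2 + \ell^2\|\underline\theta_h\|_{L^2(0,T;H)}^2\right).
$$
The right-hand side is bounded by a constant $C(\ell,T)$ thanks to Lemma \ref{Firstesti} (which controls $\partial_t\widehat\varphi_h$ in $L^2(0,T;H)$ and $\overline\varphi_h$ in $L^\infty(0,T;V)$, hence in $L^2(0,T;H)$), together with (A2) to handle $\pi(\overline\varphi_h)$, and the $L^\infty(0,T;H)$ bound on $\overline\theta_h$ from Lemma \ref{Firstesti} (from which an $L^2(0,T;H)$ bound on $\underline\theta_h$ follows after adding the contribution of the initial datum $\theta_0\in V\hookrightarrow H$).

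The main obstacle is the rigorous justification of the inequality $-\int_{\Omega}\Delta\varphi_{n+1}\,\xi_{n+1}\geq 0$, since $\xi_{n+1}$ is only an element of $H$ and $\beta$ may be multivalued (as in the double obstacle case \eqref{obspot}); one cannot integrate by parts directly at the limit level. Handling this via the $\ep$-approximation is routine but must be spelled out carefully, since it relies on the convergences \eqref{weak1}, \eqref{weak2} and the identification of $\xi_{n+1}$ performed in the proof of Lemma \ref{about a elliptic eq}.
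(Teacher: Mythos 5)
Your proposal is correct and follows essentially the same route as the paper's proof: test the second equation of \ref{Pn} by $h\xi_{n+1}$, use the nonnegativity of $(-\Delta\varphi_{n+1},\xi_{n+1})_{H}$ justified through the Yosida approximation (for which this pairing equals $\int_{\Omega}\beta_{\ep}'(\varphi_{n+1})|\nabla\varphi_{n+1}|^2\geq 0$), absorb the remaining three cross terms by Young's inequality, sum over $n$, and invoke the bounds of Lemmas \ref{Firstesti} and \ref{Secondesti}. The only difference is one of packaging: the paper runs the whole computation for Lipschitz $\beta$ and recovers the general case by lower semicontinuity of the norm under the weak convergence $\beta_{\ep}(\varphi_{\ep})\rightharpoonup\xi$, which is the cleanest rigorous version of the limit passage you sketch for the sign of the Laplacian term.
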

%===========Lem 4.3===========%
%=========== Proof of Lem 4.3===========%
\begin{proof}
We formally derive the estimate by assuming that 
$\beta$ is Lipschitz continuous (as for the Yosida approximation) 
and observing that 
the estimate can be extended to the general case by lower semicontinuity. 
We test the second equation in \ref{Pn} by $h\xi_{n+1}$.     
Since 
$(-\Delta\varphi_{n+1}, \xi_{n+1})_{H} 
= \int_{\Omega}\beta'(\varphi_{n+1})|\nabla\varphi_{n+1}|^2\geq0$, 
then by the Young inequality we obtain 
\begin{align*}
&h\int_{\Omega}|\xi_{n+1}|^2 
\\ \notag 
&\leq h\ell\int_{\Omega}\theta_n \xi_{n+1}
   - h\int_{\Omega}\delta_h \varphi_n\cdot\xi_{n+1}
   - h\int_{\Omega}\pi(\varphi_{n+1})\xi_{n+1}
\\ \notag 
&\leq \frac{3}{2}h\ell^2\int_{\Omega}|\theta_n|^2 
         + \frac{3}{2}h\int_{\Omega}|\delta_h \varphi_n|^2 
         + \frac{3}{2}\|\pi'\|_{L^{\infty}(\mathbb{R})}^2h\int_{\Omega}|\varphi_{n+1}|^2 
         + \frac{1}{2}h\int_{\Omega}|\xi_{n+1}|^2.
\end{align*}
Thus, by summing over $n=0, ..., m-1$ with $1\leq m \leq N$, \pier{and recalling}
Lemmas~\ref{Firstesti} and \ref{Secondesti}, 
we conclude that Lemma \ref{Thirdesti} holds.   
\end{proof}
%=========== Proof of Lem 4.3===========%

%===========Lem 4.4===========%
\begin{lem}\label{Fourthesti}
Let $h_1$ be as in Lemma \ref{Firstesti}. 
Then for all $\ell > 0$
there exist constants 
$C=C(\ell, T)>0$ and $\widetilde{C}=\widetilde{C}(\ell, T)>0$ such that 
\begin{align}
&\|\Delta\overline{\theta}_h\|_{L^2(0, T; H)}^2 
+ \|\Delta\overline{\varphi}_h\|_{L^2(0, T; H)}^2
\leq C, \notag 
\\ 
&\|\overline{\theta}_h\|_{L^2(0, T; W)}^2 
+ \|\overline{\varphi}_h\|_{L^2(0, T; W)}^2
\leq \widetilde{C}   \label{secondineqinLem4.4}                  
\end{align}
for all $h \in (0, h_1)$. 
\end{lem}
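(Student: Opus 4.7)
The plan is to isolate the Laplacian from each of the two equations in \ref{Pn}, estimate the resulting right-hand side in $L^2(0,T;H)$ using the bounds already obtained in Lemmas \ref{Firstesti}--\ref{Thirdesti}, and then upgrade to a $W$-bound via elliptic regularity with the homogeneous Neumann boundary condition.

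For the temperature equation, I would rewrite the first line of \ref{Pn} as $-\Delta\theta_{n+1} = f_{n+1} - \delta_h\theta_n - \ell\,\delta_h\varphi_n$, square in $H$, multiply by $h$, and sum over $n=0,\dots,N-1$, obtaining
$$\|\Delta\overline{\theta}_h\|_{L^2(0,T;H)}^2 \leq 3\|\overline{f}_h\|_{L^2(0,T;H)}^2 + 3\|\partial_t\widehat{\theta}_h\|_{L^2(0,T;H)}^2 + 3\ell^2\|\partial_t\widehat{\varphi}_h\|_{L^2(0,T;H)}^2.$$
The first term is controlled by (A3) via Jensen's inequality (which yields $\|\overline{f}_h\|_{L^2(0,T;H)} \leq \|f\|_{L^2(0,T;H)}$), while the remaining two are controlled by Lemmas \ref{Secondesti} and \ref{Firstesti}, respectively.

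For the phase equation, the same strategy applied to $-\Delta\varphi_{n+1} = \ell\,\theta_n - \delta_h\varphi_n - \xi_{n+1} - \pi(\varphi_{n+1})$ produces
$$\|\Delta\overline{\varphi}_h\|_{L^2(0,T;H)}^2 \leq C\bigl(\|\underline{\theta}_h\|_{L^2(0,T;H)}^2 + \|\partial_t\widehat{\varphi}_h\|_{L^2(0,T;H)}^2 + \|\overline{\xi}_h\|_{L^2(0,T;H)}^2 + \|\pi(\overline{\varphi}_h)\|_{L^2(0,T;H)}^2\bigr).$$
The Lipschitz bound $|\pi(r)| \leq \|\pi'\|_{L^\infty(\mathbb{R})}|r|$ (together with $\pi(0)=0$) reduces the last term to $\|\overline{\varphi}_h\|_{L^2(0,T;H)}^2$, which together with $\|\underline{\theta}_h\|_{L^2(0,T;H)}^2 \leq h\|\theta_0\|_{H}^2 + T\|\overline{\theta}_h\|_{L^\infty(0,T;H)}^2$ is controlled by Lemma \ref{Firstesti}; the remaining two pieces are bounded by Lemmas \ref{Firstesti} and \ref{Thirdesti}.

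Finally, since $\overline{\theta}_h(t),\overline{\varphi}_h(t)\in W$ for a.a.\ $t\in(0,T)$ and both satisfy the homogeneous Neumann condition on $\partial\Omega$, classical elliptic regularity furnishes a constant $C_\Omega>0$ with $\|z\|_W \leq C_\Omega(\|\Delta z\|_H + \|z\|_V)$ for every $z\in W$; applying this pointwise in $t$, integrating in time, and invoking the $L^2(0,T;V)$ bound of Lemma \ref{Firstesti} for $\overline{\theta}_h$ together with the $L^\infty(0,T;V)$ bound for $\overline{\varphi}_h$ closes the proof of \eqref{secondineqinLem4.4}. The only point requiring attention is that the Neumann elliptic regularity inequality holds uniformly for the admissible classes of domains (bounded with smooth boundary, $\mathbb{R}^N$, $\mathbb{R}^N_+$, or the exterior of a smooth bounded set); this is standard in each case, so the argument is essentially bookkeeping of the earlier lemmas and poses no substantive obstacle.
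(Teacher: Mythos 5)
Your proposal is correct and follows essentially the same route as the paper: isolate $\Delta\theta_{n+1}$ and $\Delta\varphi_{n+1}$ from the two equations of \ref{Pn}, bound the right-hand sides in $L^2(0,T;H)$ via Lemmas \ref{Firstesti}--\ref{Thirdesti}, and conclude with elliptic regularity combined with the $V$-bounds of Lemma \ref{Firstesti}. The extra details you supply (Jensen for $\overline{f}_h$, the Lipschitz bound for $\pi$, the estimate for $\underline{\theta}_h$) are all consistent with what the paper leaves implicit.
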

%===========Lem 4.4===========%
%=========== Proof of Lem 4.4===========%
\begin{proof}
From the first equation in \ref{Pn} we have 
\begin{align*}
h^{1/2}\|\Delta\theta_{n+1}\|_{H}  
&= h^{1/2}\|\delta_h \theta_n + \ell\delta_h \varphi_n - f_{n+1}\|_{H}
\\ \notag 
&\leq h^{1/2}\|\delta_h \theta_n\|_{H} + h^{1/2}\ell\|\delta_h \varphi_n\|_{H} 
         + h^{1/2}\|f_{n+1}\|_{H}.
\end{align*}
Thus, by Lemmas \ref{Firstesti} and \ref{Secondesti},  
there exists a constant $C_1=C_1(T)>0$ such that 
\begin{align}\label{fo1}
 \|\Delta\overline{\theta}_h\|_{L^2(0, T; H)}^2 \leq C_1.                     
\end{align}
It follows from the second equation in \ref{Pn} that  
\begin{align*}
h^{1/2}\|\Delta\varphi_{n+1}\|_{H} 
&= h^{1/2}\|\delta_h \varphi_n + \xi_{n+1} + \pi(\varphi_{n+1}) - \ell\theta_n\|_{H}
\\ \notag 
&\leq h^{1/2}\|\delta_h \varphi_n\|_{H} 
        + h^{1/2}\|\xi_{n+1}\|_{H} 
        + h^{1/2}\|\pi'\|_{L^{\infty}(\mathbb{R})}\|\varphi_{n+1}\|_{H} 
        + h^{1/2}\ell\|\theta_n\|_{H}.
\end{align*}
Hence Lemmas \pier{\ref{Firstesti}-\ref{Thirdesti}} 
yield that there exists a constant $C_2=C_2(T)>0$ such that 
\begin{align}\label{fo2}
 \|\Delta\overline{\varphi}_h\|_{L^2(0, T; H)}^2 \leq C_2.                     
\end{align}
Now we recall Lemma \ref{Firstesti} once more 
and invoke the elliptic regularity theory 
to infer the estimate \eqref{secondineqinLem4.4}. 
Therefore, by virtue of \eqref{fo1} and \eqref{fo2}, 
Lemma \ref{Fourthesti} is completely proved.  
\end{proof}
%=========== Proof of Lem 4.4===========%

%===========Lem 4.5===========%
\begin{lem}\label{Fifthesti}
Let $h_1$ be as in Lemma \ref{Firstesti}. 
Then for all $\ell > 0$ 
there exists a constant $C=C(\ell, T)>0$ such that 
\begin{align*}
 \|\widehat{\theta}_h\|_{H^1(0, T; H) \cap L^{\infty}(0, T; V)} 
 + \|\widehat{\varphi}_h\|_{H^1(0, T; H) \cap L^{\infty}(0, T; V)} 
 \leq C                    
\end{align*}
for all $h \in (0, h_1)$. 
\end{lem}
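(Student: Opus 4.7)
The plan is to derive this lemma directly from the previous ones together with the elementary identities collected in the Remark, since $\widehat{\theta}_h$ and $\widehat{\varphi}_h$ are just the piecewise-linear interpolants while the bounds in Lemmas \ref{Firstesti}--\ref{Fourthesti} are phrased for the piecewise-constant interpolants $\overline{\theta}_h$, $\overline{\varphi}_h$ and for the discrete derivatives $\delta_h\theta_n$, $\delta_h\varphi_n$.

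First I would handle the $H^1(0,T;H)$ parts. By the definition \eqref{hat}, $\partial_t\widehat{\theta}_h$ coincides a.e.\ with $\delta_h\theta_n$ on $(nh,(n+1)h)$ and analogously for $\widehat{\varphi}_h$, so Lemma \ref{Secondesti} controls $\|\partial_t\widehat{\theta}_h\|_{L^2(0,T;H)}$ and Lemma \ref{Firstesti} controls $\ell^2\|\partial_t\widehat{\varphi}_h\|_{L^2(0,T;H)}^2$; once $\ell$ is fixed this yields a bound for $\|\partial_t\widehat{\varphi}_h\|_{L^2(0,T;H)}$. For the $L^2(0,T;H)$ component of the $H^1$ norm, I would invoke \eqref{rem1}--\eqref{rem2}, which give
\[
\|\widehat{\theta}_h\|_{L^2(0,T;H)}^2\le h\|\theta_0\|_H^2+2\|\overline{\theta}_h\|_{L^2(0,T;H)}^2,\qquad \|\widehat{\varphi}_h\|_{L^2(0,T;H)}^2\le h\|\varphi_0\|_H^2+2\|\overline{\varphi}_h\|_{L^2(0,T;H)}^2.
\]
The right-hand sides are uniformly bounded in $h\in(0,h_1)$ by Lemma \ref{Firstesti} (for $\overline{\theta}_h$ via the $L^\infty(0,T;H)$ bound, and for $\overline{\varphi}_h$ via the $L^\infty(0,T;V)$ bound, noting $\|\cdot\|_H\le\|\cdot\|_V$) and by assumption (A4) on the initial data.

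Next I would settle the $L^\infty(0,T;V)$ bounds by using the identities \eqref{rem3}--\eqref{rem4},
\[
\|\widehat{\theta}_h\|_{L^\infty(0,T;V)}=\max\bigl\{\|\theta_0\|_V,\,\|\overline{\theta}_h\|_{L^\infty(0,T;V)}\bigr\},\qquad \|\widehat{\varphi}_h\|_{L^\infty(0,T;V)}=\max\bigl\{\|\varphi_0\|_V,\,\|\overline{\varphi}_h\|_{L^\infty(0,T;V)}\bigr\},
\]
which hold because $\widehat{\theta}_h$ and $\widehat{\varphi}_h$ are linear on each subinterval, hence their $V$-norm is attained at the nodes. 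The right-hand sides are then controlled: for $\overline{\theta}_h$ this is the $L^\infty(0,T;V)$ bound from Lemma \ref{Secondesti}, while for $\overline{\varphi}_h$ this is the $L^\infty(0,T;V)$ bound from Lemma \ref{Firstesti} (after dividing by $\ell^2$ and absorbing $\ell$ into the constant $C(\ell,T)$). Assumption (A4) again takes care of the initial-datum terms.

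Combining the four pieces gives the required constant $C=C(\ell,T)>0$ valid for all $h\in(0,h_1)$. There is no real obstacle here: this statement is essentially a packaging lemma that transfers the previously established estimates from $\overline{\cdot}_h$ and $\delta_h(\cdot)_n$ to $\widehat{\cdot}_h$, and will be convenient later in Section \ref{Sec4} when passing to the limit $h\searrow 0$, since weak-$*$ compactness in $L^\infty(0,T;V)$ and weak compactness in $H^1(0,T;H)$ require exactly the norms bounded here.
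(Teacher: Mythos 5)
Your proposal is correct and follows essentially the same route as the paper, which simply invokes Lemmas \ref{Firstesti} and \ref{Secondesti} together with the identities \eqref{rem1}--\eqref{rem4}; your write-up just spells out the bookkeeping (discrete derivatives for the $H^1(0,T;H)$ part, nodal values for the $L^\infty(0,T;V)$ part) that the paper leaves implicit. No gaps.
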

%===========Lem 4.5===========%
%=========== Proof of Lem 4.5===========%
\begin{proof}
Lemmas \ref{Firstesti} \pier{and \ref{Secondesti}, along with \eqref{rem1}-\eqref{rem4},}
lead to Lemma \ref{Fifthesti}.
\end{proof}
%=========== Proof of Lem 4.5===========%

\begin{prth2.2}
By Lemmas \ref{Firstesti}-\ref{Fifthesti}, 
\eqref{deltah}-\eqref{overandunderline},  \eqref{rem5} and \eqref{rem6} 
there exist some functions 
$\theta \in H^1(0, T; H) \cap L^{\infty}(0, T; V) \cap L^2(0, T; W)$, 
$\varphi \in H^1(0, T; H) \cap L^{\infty}(0, T; V) \cap L^2(0, T; W)$ 
and  $\xi \in L^2(0, T; H)$ \pier{and a subsequence of $h$} such that 
\begin{align}
&\widehat{\theta}_h \to \theta 
\quad \mbox{weakly in}\ H^1(0, T; H), \label{w1} \\ 
&\widehat{\varphi}_h \to \varphi  
\quad \mbox{weakly in}\ H^1(0, T; H), \label{w2} \\ \notag 
&\overline{\theta}_h \to \theta 
\quad \mbox{weakly$^{*}$ in}\ L^{\infty}(0, T; V), \\ \notag 
&\overline{\varphi}_h \to \varphi 
\quad \mbox{weakly$^{*}$ in}\ L^{\infty}(0, T; V), \\ 
&\overline{\theta}_h \to \theta 
\quad \mbox{weakly in}\ L^2(0, T; W), \label{w3} \\ 
&\overline{\varphi}_h \to \varphi 
\quad \mbox{weakly in}\ L^2(0, T; W), \label{w4} \\ 
&\underline{\theta}_h = \overline{\theta}_h - h\partial_t \widehat{\theta}_h 
\to \theta 
\quad \mbox{weakly in}\ L^2(0, T; H), \label{w5} \\ 
&\overline{\xi}_h \to \xi 
\quad \mbox{weakly in}\ L^2(0, T; H) \label{w6}  
\end{align}
as $h= h_j\searrow0$. 
Combining \eqref{w1}-\eqref{w3},  
observing that $\overline{f}_h \to f$ strongly in $L^2(0, T; H)$ 
as $h\searrow0$ 
(cf. Remark \ref{about.conv.of.fh}) 
and passing to the limit 
in the first equation in \ref{Ph}  
lead to \eqref{df1}.    
Now we show that 
\begin{align}\label{collikurima1}
\partial_t \varphi -\Delta\varphi + \xi + \pi(\varphi) = \ell\theta. 
\end{align}
Let $\psi \in C_{\mathrm c}^{\infty}([0, T]\times\overline{\Omega})$. 
Then there exists a bounded domain $D \subset \Omega$ with smooth boundary 
such that $\mbox{supp}\,\psi \subset (0, T)\times D$ 
and we see from the second equation in \ref{Ph} that 
\begin{align}\label{collikurima2}
0 &= \int_{0}^{T}
      (\partial_t \widehat{\varphi}_h (t) - \Delta\overline{\varphi}_h (t) 
           + \overline{\xi}_h (t) - \ell\underline{\theta}_h (t), \psi(t))_{H}\,dt 
\\ \notag 
     &\,\quad+ \int_{0}^{T}\left(\int_{D}\pi(\overline{\varphi}_h (t))\psi(t)\right)\,dt. 
\end{align}
Here, since the embedding $H^1(D)\hookrightarrow L^2(D)$ is compact, 
we derive from Lemma \ref{Fifthesti} and \eqref{w2} that 
\begin{align}\label{collikurima3}
\widehat{\varphi}_h \to \varphi \quad\mbox{strongly in}\ C([0, T]; L^2(D))  
\end{align}
as $h=h_j\searrow0$ (see e.g., \cite[Section 8, Corollary 4]{Simon}). 
Also,  
we infer from \eqref{rem6}, Lemma~\ref{Firstesti} and \eqref{collikurima3} that
\begin{align*}
\|\overline{\varphi}_h - \varphi\|_{L^2(0, T; L^2(D))} 
&\leq \|\overline{\varphi}_h - \widehat{\varphi}_h\|_{L^2(0, T; H)} 
       + \|\widehat{\varphi}_h - \varphi\|_{L^2(0, T; L^2(D))} 
\\    
&= \frac{h}{\sqrt{3}}\|\partial_t \widehat{\varphi}_h\|_{L^2(0, T; H)} 
     + \|\widehat{\varphi}_h - \varphi\|_{L^2(0, T; L^2(D))} 
\to 0 
\end{align*}
as $h=h_j\searrow0$,
\pier{whence} it holds that  
\begin{align}\label{collikurima4}
\overline{\varphi}_h \to \varphi \quad\mbox{strongly in}\ L^2(0, T; L^2(D))
\end{align}
as $h=h_j\searrow0$. 
Thus it follows from \eqref{w2}, \eqref{w4}-\eqref{w6}, 
\eqref{collikurima2} and \eqref{collikurima4}  
that 
\begin{align*}
\int_{0}^{T}\left(\int_{\Omega} 
         (\partial_t \varphi(t) - \Delta\varphi(t) 
                                + \xi(t) + \pi(\varphi(t)) - \ell\theta(t))\psi(t) \right)\,dt 
=0 
\end{align*}
for all $\psi \in C_{\mathrm c}^{\infty}([0, T]\times\overline{\Omega})$, 
which implies \eqref{collikurima1}. 

Next we prove that 
\begin{align}\label{tomato1}
\xi \in \beta(\varphi) \quad \mbox{a.e.\ on}\ \Omega\times(0, T). 
\end{align} 
Let $E \subset \Omega$ be an arbitrary bounded domain with smooth boundary. 
Then we can verify that   
\begin{align}\label{tomato2} 
1_{E}\overline{\varphi}_h \to 1_{E}\varphi \quad\mbox{strongly in}\ L^2(0, T; H)
\end{align}
as $h=h_j\searrow0$, 
where $1_{E}$ is the characteristic function on $E$.    
Hence from \eqref{w6} and \eqref{tomato2}
we deduce that     
\begin{align*}
\int_{0}^{T} (1_{E}\overline{\xi}_h (t), 1_{E}\overline{\varphi}_h (t))_{H}\,dt  
= \int_{0}^{T} (\overline{\xi}_h (t), 1_{E}\overline{\varphi}_h (t))_{H}\,dt 
&\to \int_{0}^{T} (\xi(t), 1_{E}\overline{\varphi}_h (t))_{H}\,dt 
\\ \notag 
&= \int_{0}^{T} (1_{E}\xi(t), 1_{E}\overline{\varphi}_h (t))_{H}\,dt
\end{align*}
as $h=h_j\searrow0$.  
Then the inclusion  
$1_{E}\xi \in \beta(1_{E}\varphi)$ holds a.e.\ on $\Omega\times(0, T)$   
(see e.g., \cite[Lemma 1.3, p.\ 42]{Barbu1}), 
that is, 
$$
\xi = 1_{E}\xi \in \beta(1_{E}\varphi) = \beta(\varphi) 
\quad \mbox{a.e.\ on}\ E\times(0, T). 
$$
Thus, since $E \subset \Omega$ is arbitrary,  
we conclude that \eqref{tomato1} holds.   

Therefore combining \eqref{collikurima1} and \eqref{tomato1} leads to \eqref{df2}. 
Next we confirm \eqref{df3}. 
Let $E \subset \Omega$ be an arbitrary bounded domain with smooth boundary. 
Then we see that 
\begin{align*}
\widehat{\theta}_h \to \theta ,\  
\widehat{\varphi}_h \to \varphi 
\quad\mbox{strongly in}\ C([0, T]; L^2(E)). 
\end{align*} 
In particular, it follows from \eqref{hat} that 
\begin{align*}
&\|\theta_0 - \theta(0)\|_{L^2(E)} 
= \|\widehat{\theta}_h (0) - \theta(0)\|_{L^2(E)} \to 0, 
\\ 
&\|\varphi_0 - \varphi(0)\|_{L^2(E)} 
= \|\widehat{\varphi}_h (0) - \varphi(0)\|_{L^2(E)} \to 0
\end{align*}
as $h=h_j\searrow0$. 
Hence we can show that 
\begin{align*}
\theta(0) = \theta_0,\ \varphi(0)=\varphi_0                                          
         \quad\mbox{a.e.\ on}\ E.  
\end{align*}
Hence, since $E \subset \Omega$ is arbitrary, 
we can obtain \eqref{df3}. 

Next, we should prove the uniqueness of the solution,
\pier{which is of course known. However, for the sake of completeness
we detail here a uniqueness proof for the reader.}
% available in the literature (see e.g., \blue{\cite{Cag}}), 
% but we repeat it for the sake of completeness.}  
Let $(\theta_j, \varphi_j, \xi_j)$, $j=1, 2$, be two solutions. 
Then the identities 
\begin{align}
&\partial_t (\theta_1 - \theta_2) + \ell\partial_t (\varphi_1 - \varphi_2) 
- \Delta(\theta_1 - \theta_2) 
= 0, \label{milan1} 
\\ 
&\partial_t (\varphi_1 - \varphi_2) - \Delta(\varphi_1 - \varphi_2) 
= \ell(\theta_1 - \theta_2) -\xi_1+\xi_2-\pi(\varphi_1)+\pi(\varphi_2)  \label{milan2} 
\end{align}   
hold a.e.\ on $\Omega\times(0, T)$. 
Integrating \eqref{milan1} over $(0, t)$, where $t \in [0, T]$, 
multiplying by $\theta_1 - \theta_2$ and integrating over $\Omega$, 
we have 
\begin{align}\label{milan3}
&\|\theta_1(t) - \theta_2(t)\|_{H}^2 
+ \ell(\varphi_1(t)-\varphi_2(t), \theta_1(t) - \theta_2(t))_{H} 
\\ \notag 
&+ \frac{1}{2}\frac{d}{dt}
                       \left\|\nabla\int_{0}^{t}(\theta_1(s) - \theta_2(s))\,ds\right\|_{H}^2 
= 0. 
\end{align} 
On the other hand, 
multiplying \eqref{milan2} by $\varphi_1 - \varphi_2$ 
leads to the identity 
\begin{align}\label{milan4}
&\frac{1}{2}\frac{d}{dt}\|\varphi_1(t)-\varphi_2(t)\|_{H}^2 
+ \|\nabla(\varphi_1(t)-\varphi_2(t))\|_{H}^2 
\\ \notag 
&= \ell(\theta_1(t)-\theta_2(t), \varphi_1(t)-\varphi_2(t))_{H} 
   - (\xi_1(t)-\xi_2(t), \varphi_1(t)-\varphi_2(t))_{H} 
\\ \notag  
   &\,\quad- (\pi(\varphi_1(t))-\pi(\varphi_2(t)), \varphi_1(t)-\varphi_2(t))_{H}. 
\end{align} 
Thus it follows from \eqref{milan3}, \eqref{milan4} 
and the monotonicity of $\beta$ 
that 
\begin{align}\label{milan5}
&\|\theta_1(t) - \theta_2(t)\|_{H}^2 
+ \frac{1}{2}\frac{d}{dt}
                       \left\|\nabla\int_{0}^{t}(\theta_1(s) - \theta_2(s))\,ds\right\|_{H}^2 
\\ \notag  
&+ \frac{1}{2}\frac{d}{dt}\|\varphi_1(t)-\varphi_2(t)\|_{H}^2 
+ \|\nabla(\varphi_1(t)-\varphi_2(t))\|_{H}^2  
\\ \notag 
&= - (\xi_1(t)-\xi_2(t), \varphi_1(t)-\varphi_2(t))_{H} 
     - (\pi(\varphi_1(t))-\pi(\varphi_2(t)), \varphi_1(t)-\varphi_2(t))_{H} 
\\ \notag 
&\leq \|\pi'\|_{L^{\infty}(\mathbb{R})}\|\varphi_1(t)-\varphi_2(t)\|_{H}^2. 
\end{align}
Owing to the initial conditions \eqref{df3} satisfied 
by both $(\theta_j, \varphi_j)$, $j=1, 2$,  
the integration of \eqref{milan5} over $(0, t)$, where $t \in [0, T]$, 
yields that  
\begin{align*}%\label{milan6}
&\int_{0}^{t} \|\theta_1(s) - \theta_2(s)\|_{H}^2\,ds 
+ \frac{1}{2}\left\|\nabla\int_{0}^{t}(\theta_1(s) - \theta_2(s))\,ds\right\|_{H}^2 
\\ \notag  
&+ \frac{1}{2}\|\varphi_1(t)-\varphi_2(t)\|_{H}^2 
+ \int_{0}^{t}\|\nabla(\varphi_1(s)-\varphi_2(s))\|_{H}^2\,ds   
\\ \notag 
&\leq \|\pi'\|_{L^{\infty}(\mathbb{R})}
                                          \int_{0}^{t}\|\varphi_1(s)-\varphi_2(s)\|_{H}^2\,ds.  
\end{align*}
Therefore, applying the Gronwall lemma, 
we see that $\theta_1=\theta_2$ and $\varphi_1=\varphi_2$. 
Then the identity $\xi_1=\xi_2$ holds by \eqref{df2}.  
\qed 
\end{prth2.2}

%%==============================================================%%
%%==============                                  ==============%%
%%======                      Section5                    ======%%
%%====                                                      ====%%
%%==                                                          ==%%
%%====                Error estimates               ====%%
%%======                                                  ======%%
%%==============                                  ==============%%
%%==============================================================%%
 \section{Error estimates}\label{Sec5}
 
In this section we will prove Theorem \ref{erroresti}.

%===========Lem 5.1===========%
\begin{lem}\label{semierroresti} 
Let $h_1$ be as in Lemma \ref{Firstesti}. 
Then for all $\ell >0$ 
there exists a constant $M_1=M_1(\ell, T)>0$ such that 
\begin{align}\label{semierroresti1}
&\ell\|\widehat{\varphi}_h - \varphi \|_{L^{\infty}(0, T; H)} 
+ \ell\|\overline{\varphi}_h - \varphi\|_{L^2(0, T; V)} \\ \notag
&+ \|\widehat{\theta}_h - \theta 
                                  + \ell(\widehat{\varphi}_h - \varphi)\|_{L^{\infty}(0, T; H)} 
+ \|\overline{\theta}_h - \theta\|_{L^2(0, T; V)}  \\ \notag 
&\leq M_1 h^{1/2} + M_1 \|\overline{f}_h - f\|_{L^2(0, T; H)} 
\end{align}
for all $h \in (0, h_1)$.
In particular, 
for all $\ell > 0$ 
there exists a constant $M_2=M_2(\ell, T)>0$ such that 
\begin{align*}%\label{semierroresti2}
 \|\widehat{\theta}_h - \theta \|_{L^{\infty}(0, T; H)} 
 \leq M_2 h^{1/2} + M_2 \|\overline{f}_h - f\|_{L^2(0, T; H)} 
\end{align*} 
for all $h \in (0, h_1)$. 
\end{lem}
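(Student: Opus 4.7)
The plan is to subtract \ref{P} from \ref{Ph} and perform a coupled energy estimate tailored to the enthalpy structure $\theta + \ell \varphi$. Set $E_\theta := \widehat{\theta}_h - \theta$, $\overline{E}_\theta := \overline{\theta}_h - \theta$, $E_\varphi := \widehat{\varphi}_h - \varphi$, $\overline{E}_\varphi := \overline{\varphi}_h - \varphi$, $\overline{E}_\xi := \overline{\xi}_h - \xi$. By (A4) and the initial conditions in \ref{P} and \ref{Ph}, $E_\theta(0) = E_\varphi(0) = 0$, and the errors solve
\begin{equation*}
\partial_t (E_\theta + \ell E_\varphi) - \Delta \overline{E}_\theta = \overline{f}_h - f, \qquad \partial_t E_\varphi - \Delta \overline{E}_\varphi + \overline{E}_\xi + \pi(\overline{\varphi}_h) - \pi(\varphi) = \ell(\underline{\theta}_h - \theta).
\end{equation*}
I would test the first equation by $\overline{E}_\theta + \ell \overline{E}_\varphi$ and the second by $\ell^2 \overline{E}_\varphi$, then integrate over $\Omega \times (0,t)$ for $t \in [0, T]$.

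Using the decomposition $\overline{E}_\theta + \ell \overline{E}_\varphi = (E_\theta + \ell E_\varphi) + (\overline{\theta}_h - \widehat{\theta}_h) + \ell(\overline{\varphi}_h - \widehat{\varphi}_h)$, the time-derivative pairing yields $\tfrac{1}{2}\|E_\theta + \ell E_\varphi\|_H^2(t)$ plus a remainder controlled by $\|\partial_t(E_\theta + \ell E_\varphi)\|_{L^2(0,T;H)} \cdot \|(\overline{\theta}_h - \widehat{\theta}_h) + \ell(\overline{\varphi}_h - \widehat{\varphi}_h)\|_{L^2(0,T;H)} = O(h)$, via Lemma~\ref{Fifthesti}, the continuous regularity, and \eqref{rem5}--\eqref{rem6}. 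The same trick on the second equation contributes $\tfrac{\ell^2}{2}\|E_\varphi\|_H^2(t) + O(h)$. Integration by parts in the Laplacian terms produces $\int_0^t \|\nabla \overline{E}_\theta\|^2 + \ell \int_0^t (\nabla \overline{E}_\theta, \nabla \overline{E}_\varphi)_H + \ell^2 \int_0^t \|\nabla \overline{E}_\varphi\|^2$, and the $\ell$-cross term is absorbed by Young's inequality ($\ell|(\nabla \overline{E}_\theta, \nabla \overline{E}_\varphi)_H| \leq \tfrac{1}{2}\|\nabla \overline{E}_\theta\|^2 + \tfrac{\ell^2}{2}\|\nabla \overline{E}_\varphi\|^2$), leaving at least $\tfrac{1}{2}\int_0^t \|\nabla \overline{E}_\theta\|^2 + \tfrac{\ell^2}{2}\int_0^t \|\nabla \overline{E}_\varphi\|^2$ on the LHS. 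The monotonicity term $\ell^2(\overline{E}_\xi, \overline{E}_\varphi)_H \geq 0$ from (A1) is discarded.

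The remaining right-hand side is a forcing contribution bounded by $\tfrac12\|\overline{f}_h - f\|_{L^2(0,T;H)}^2 + \tfrac12 \int_0^t\|\overline{E}_\theta + \ell\overline{E}_\varphi\|^2$, a Lipschitz term $\ell^2(\pi(\overline{\varphi}_h) - \pi(\varphi), \overline{E}_\varphi)_H \leq C\ell^2 \|\overline{E}_\varphi\|_H^2$ from (A2), and a coupling $\ell^3 (\underline{\theta}_h - \theta, \overline{E}_\varphi)_H$, handled by the identity $\underline{\theta}_h - \theta = \overline{E}_\theta - h \partial_t \widehat{\theta}_h$ and Young's inequality. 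The surviving $\overline{E}$-quantities are then controlled by $\|\overline{E}_\theta\|^2 \leq 2\|E_\theta\|^2 + 2\|\overline{\theta}_h - \widehat{\theta}_h\|^2$, $\|E_\theta\|^2 \leq 2\|E_\theta + \ell E_\varphi\|^2 + 2\ell^2 \|E_\varphi\|^2$ and the analogous bounds for $\overline{E}_\varphi$, incurring only $O(h^2)$ remainders from \eqref{rem5}--\eqref{rem6} and the uniform estimates of Section~\ref{Sec4}. Collecting produces
\begin{equation*}
\|E_\theta + \ell E_\varphi\|_H^2(t) + \ell^2 \|E_\varphi\|_H^2(t) + \int_0^t \bigl(\|\nabla \overline{E}_\theta\|^2 + \ell^2 \|\nabla \overline{E}_\varphi\|^2\bigr) \leq C h + C \|\overline{f}_h - f\|_{L^2(0,T;H)}^2 + C \int_0^t \bigl(\|E_\theta + \ell E_\varphi\|_H^2 + \ell^2 \|E_\varphi\|_H^2\bigr),
\end{equation*}
with $C = C(\ell, T)$. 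Gronwall's lemma, after taking square roots, delivers the four $L^\infty(H)$ and $L^2(0,T;H)$-gradient bounds appearing in \eqref{semierroresti1}. The missing $L^2(0,T;H)$ pieces of $\|\overline{E}_\theta\|_{L^2(V)}$ and $\ell\|\overline{E}_\varphi\|_{L^2(V)}$ follow by the triangle inequality $\overline{E}_\theta = E_\theta + (\overline{\theta}_h - \widehat{\theta}_h)$ (and its $\varphi$-analog) combined with \eqref{rem5}--\eqref{rem6} and $\|E_\theta\|_{L^\infty(H)} \leq \|E_\theta + \ell E_\varphi\|_{L^\infty(H)} + \ell\|E_\varphi\|_{L^\infty(H)}$; the very same triangle inequality produces the concluding $L^\infty(H)$-bound on $\widehat{\theta}_h - \theta$.

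The main delicate point will be the $\ell$-dependent balancing of test functions: the choice $\ell^2 \overline{E}_\varphi$ for the second equation is exactly what is needed to absorb both the cross gradient term $\ell (\nabla \overline{E}_\theta, \nabla \overline{E}_\varphi)_H$ and the coupling $\ell^3 (\underline{\theta}_h - \theta, \overline{E}_\varphi)_H$ into the $\ell$-weighted norms on the LHS of \eqref{semierroresti1}, while all interpolation-mismatch remainders between the piecewise-linear $\widehat{\cdot}_h$ and the piecewise-constant $\overline{\cdot}_h$ are kept at order $O(h)$ and therefore become $O(h^{1/2})$ after the square root.
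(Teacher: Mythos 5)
Your proof is correct and reaches the same Gronwall inequality as the paper, but via a slightly different decomposition: you test the error equations with the piecewise-constant errors $\overline{E}_\theta+\ell\overline{E}_\varphi$ and $\ell^2\overline{E}_\varphi$, so the $\widehat{\cdot}_h$-versus-$\overline{\cdot}_h$ mismatch shows up only in the time-derivative pairings, where it is $O(h)$ thanks to the $H^1(0,T;H)$ bounds of Lemmas \ref{Secondesti} and \ref{Fifthesti}, the regularity $\partial_t\theta,\partial_t\varphi\in L^2(0,T;H)$ of the continuous solution, and \eqref{rem5}--\eqref{rem6}; in exchange, the Laplacian, monotonicity and Lipschitz terms come out exactly, with no remainder. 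The paper instead tests with the piecewise-linear errors $\widehat{\theta}_h-\theta+\ell(\widehat{\varphi}_h-\varphi)$ and $\ell^2(\widehat{\varphi}_h-\varphi)$, so the time derivatives integrate exactly but the mismatch migrates into the elliptic and monotone terms, which is why the paper must invoke the $L^2(0,T;W)$ bounds of Lemma \ref{Fourthesti} and the $L^2(0,T;H)$ bound on $\overline{\xi}_h$ from Lemma \ref{Thirdesti} to keep those remainders at $O(h)$. Both routes cost the same order $h$ before taking square roots; yours leans on the discrete time-derivative estimates, the paper's on the discrete elliptic regularity, and the remaining steps (absorption of the cross gradient term by Young, discarding $\ell^2(\overline{E}_\xi,\overline{E}_\varphi)_H\ge 0$, Gronwall, and the triangle inequalities recovering the $L^2(0,T;V)$ norms and the final bound on $\|\widehat{\theta}_h-\theta\|_{L^\infty(0,T;H)}$) coincide.
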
 
%===========Lem 5.1===========%
%=========== Proof of Lem 5.1===========%
\begin{proof}
Let $h \in (0, h_1)$. 
Then we infer from the second equation in \ref{Ph}, 
\eqref{deltah}-\eqref{overandunderline} and \eqref{df2} that 
\begin{align*}%\label{p1}
&\frac{\ell^2}{2}\frac{d}{dt}\|\widehat{\varphi}_h (t) - \varphi(t)\|_{H}^2 
\\ \notag
&= - \ell^2\|\nabla(\overline{\varphi}_h(t)-\varphi(t))\|_{H}^2 
   - \ell^2(-\Delta(\overline{\varphi}_h(t)-\varphi(t)), 
                       \widehat{\varphi}_h(t)-\overline{\varphi}_h(t))_H 
\\ \notag  
&\,\quad- \ell^2(\overline{\xi}_h(t)+\pi(\overline{\varphi}_h(t))-\xi(t)-\pi(\varphi(t)), 
                                                                \overline{\varphi}_h(t)-\varphi(t))_H 
\\ \notag 
&\,\quad-\ell^2(\overline{\xi}_h(t)+\pi(\overline{\varphi}_h(t))-\xi(t)-\pi(\varphi(t)), 
                                             \widehat{\varphi}_h(t)-\overline{\varphi}_h(t))_H 
\\ \notag
&\,\quad+ \ell^3(\widehat{\theta}_h(t)-\theta(t)
                                      +\ell(\widehat{\varphi}_h(t)-\varphi(t)), 
                                                                \widehat{\varphi}_h(t)-\varphi(t))_H 
       -\ell^4\|\widehat{\varphi}_h(t)-\varphi(t)\|_H^2 
\\ \notag  
&\,\quad +\ell^3(\overline{\theta}_h(t)-\widehat{\theta}_h(t), 
                                                                 \widehat{\varphi}_h(t)-\varphi(t))_H 
- \ell^3 h (\partial_t \widehat{\theta}_h(t), \widehat{\varphi}_h(t)-\varphi(t))_H\pier{.} 
\end{align*}  
\pier{Hence, integrating over $(0, t)$, where $t \in [0, T]$, the Schwarz 
and Young inequalities help us to deduce that}
\begin{align}\label{p1}
&\frac{\ell^2}{2}\|\widehat{\varphi}_h (t) - \varphi(t)\|_{H}^2 
+ \ell^2 \int_0^t\|\nabla(\overline{\varphi}_h(s)-\varphi(s))\|_H^2\,ds 
\\ \notag 
&\leq \ell^2 \|\Delta(\overline{\varphi}_h-\varphi)\|_{L^2(0, T; H)}
                                   \|\widehat{\varphi}_h-\overline{\varphi}_h\|_{L^2(0, T; H)} 
\\ \notag 
&\,\quad-\ell^2 \int_{0}^{t} 
                 (\overline{\xi}_h(s)+\pi(\overline{\varphi}_h(s))-\xi(s)-\pi(\varphi(s)), 
                                                          \overline{\varphi}_h(s)-\varphi(s))_H\,ds  
\\ \notag 
&\,\quad+\ell^2 
          \|\overline{\xi}_h+\pi(\overline{\varphi}_h)-\xi-\pi(\varphi)\|_{L^2(0, T; H)}
                                   \|\widehat{\varphi}_h-\overline{\varphi}_h \|_{L^2(0, T; H)} 
\\ \notag 
&\,\quad+\frac{3}{2}\ell^2 
   \int_0^t
   \|\widehat{\theta}_h(s)-\theta(s)+\ell(\widehat{\varphi}_h(s)-\varphi(s))\|_H^2\,ds 
\\ \notag 
&\,\quad-\frac{2}{3}\ell^4 \int_{0}^{t}\|\widehat{\varphi}_h (s) - \varphi(s)\|_H^2\,ds 
+\frac{3}{2}\ell^2 \|\overline{\theta}_h - \widehat{\theta}_h\|_{L^2(0, T; H)}^2 
+\frac{3}{2}\ell^2 h^2 \|\partial_t \widehat{\theta}_h\|_{L^2(0, T; H)}^2.   
\end{align} 
Also, the first equation in \ref{Ph} and \eqref{df1} yield that 
\begin{align*}%\label{p2}
&\frac{1}{2}\frac{d}{dt}
         \|\widehat{\theta}_h(t)-\theta(t)+\ell(\widehat{\varphi}_h(t)-\varphi(t))\|_H^2
\\ \notag 
&= -\|\nabla(\overline{\theta}_h(t)-\theta(t))\|_H^2 
    - (-\Delta(\overline{\theta}_h(t)-\theta(t)), 
                                                  \widehat{\theta}_h(t)-\overline{\theta}_h(t))_H 
\\ \notag 
&\,\quad-\ell(-\Delta(\overline{\theta}_h(t)-\theta(t)), 
                                                \widehat{\varphi}_h(t)-\overline{\varphi}_h(t))_H 
    -\ell(\nabla(\overline{\theta}_h(t)-\theta(t)), 
                                                     \nabla(\overline{\varphi}_h(t)-\varphi(t)))_H 
\\ \notag 
&\,\quad+ (\overline{f}_h (t) - f(t), 
                \widehat{\theta}_h(t)-\theta(t)+\ell(\widehat{\varphi}_h(t)-\varphi(t)))_H.
\end{align*}
Thus\pier{, from the integration over $(0, t)$, where $t \in [0, T]$, 
and the Schwarz \pier{and Young inequalities} we see} that 
\begin{align}\label{p2}
&\frac{1}{2}
\|\widehat{\theta}_h(t)-\theta(t)+\ell(\widehat{\varphi}_h(t)-\varphi(t))\|_H^2 
+ \frac{1}{2}\int_{0}^{t} \|\nabla(\overline{\theta}_h(s)-\theta(s))\|_{H}^2\,ds 
\\ \notag 
&\leq \|\Delta(\overline{\theta}_h-\theta)\|_{L^2(0, T; H)}
                                   \|\widehat{\theta}_h-\overline{\theta}_h\|_{L^2(0, T; H)} 
\\ \notag 
&\,\quad+ \ell\|\Delta(\overline{\theta}_h-\theta)\|_{L^2(0, T; H)}
                                   \|\widehat{\varphi}_h-\overline{\varphi}_h\|_{L^2(0, T; H)} 
+ \frac{\ell^2}{2}
                              \int_0^t\|\nabla(\overline{\varphi}_h(s)-\varphi(s))\|_H^2\,ds 
\\ \notag 
&\,\quad+ \frac{1}{2}\|\overline{f}_h - f\|_{L^2(0, T; H)}^2  
+ \frac{1}{2}\int_{0}^{t} 
   \|\widehat{\theta}_h(s)-\theta(s)+\ell(\widehat{\varphi}_h(s)-\varphi(s))\|_H^2\,ds.   
\end{align} 
Here, 
thanks to the inclusions 
$\overline{\xi}_h (t) \in \beta(\overline{\varphi}_h(t))$, 
$\xi(t) \in \beta(\varphi(t))$ 
and the monotonicity of $\beta$, 
it holds that  
\begin{align}\label{p3}
&- \ell^2(\overline{\xi}_h(t)+\pi(\overline{\varphi}_h(t))-\xi(t)-\pi(\varphi(t), 
                                                       \overline{\varphi}_h(t)-\varphi(t))_H 
\\ \notag 
&= - \ell^2(\overline{\xi}_h (t)-\xi(t), \overline{\varphi}_h(t)-\varphi(t))_H 
   - \ell^2(\pi(\overline{\varphi}_h(t))-\pi(\varphi(t)), 
                                                    \overline{\varphi}_h(t)-\varphi(t))_H 
\\ \notag 
&\leq \ell^2\|\pi'\|_{L^{\infty}(\mathbb{R})}\|\overline{\varphi}_h(t)-\varphi(t)\|_{H}^2  
\\ \notag 
&\leq 2\ell^2\|\pi'\|_{L^{\infty}(\mathbb{R})}
                                        \|\overline{\varphi}_h(t)-\widehat{\varphi}_h(t)\|_{H}^2 
        + 2\ell^2\|\pi'\|_{L^{\infty}(\mathbb{R})}
                                        \|\widehat{\varphi}_h(t)-\varphi(t)\|_{H}^2. 
\end{align}
Hence, 
in view of \eqref{hat}, \eqref{rem5}, \eqref{rem6}, \eqref{df3}   
and Lemmas \ref{Firstesti}-\ref{Fourthesti}, 
we collect \eqref{p1}-\eqref{p3} and deduce that   
there exists a constant $C_1=C_1(\ell, T)>0$ such that 
\begin{align*}%\label{p6}
&\frac{\ell^2}{2}\|\widehat{\varphi}_h (t) - \varphi(t)\|_{H}^2 
+\frac{1}{2}
         \|\widehat{\theta}_h(t)-\theta(t)+\ell(\widehat{\varphi}_h(t)-\varphi(t))\|_H^2 
\\ \notag 
&+ \frac{\ell^2}{2}\int_0^t\|\nabla(\overline{\varphi}_h(s)-\varphi(s))\|_H^2\,ds 
  + \frac{1}{2}\int_0^t\|\nabla(\overline{\theta}_h(s)-\theta(s))\|_H^2\,ds 
\\ \notag 
&\leq C_1 h 
 + 2\ell^2 \|\pi'\|_{L^{\infty}(\mathbb{R})}
       \int_{0}^{t} \|\widehat{\varphi}_h(s)-\varphi(s))\|_{H}^2\,ds 
\\ \notag 
&\,\quad+\left(\frac{3}{2}\ell^2 + \frac{1}{2}\right)\int_0^t
   \|\widehat{\theta}_h(s)-\theta(s)+\ell(\widehat{\varphi}_h(s)-\varphi(s))\|_H^2\,ds 
+\frac{1}{2}\|\overline{f}_h-f\|_{L^2(0, T; H)}^2. 
\end{align*}
Therefore, 
by applying the Gronwall lemma, 
we can obtain \eqref{semierroresti1}.
\end{proof}
%=========== Proof of Lem 5.1===========%

\begin{prth2.3}
Since $W^{1, 1}(0, T; H) \subset L^{\infty}(0, T; H)$, 
we have 
\begin{align}\label{fhf1}
\|\overline{f}_h - f\|_{L^2(0, T; H)}^2 
&= \frac{1}{h^2}\sum_{k=1}^{N}
      \int_{(k-1)h}^{kh} \left\|\int_{(k-1)h}^{kh} (f(s)-f(t))\,ds\right\|_H^2\,dt 
\\ \notag 
&\leq \frac{1}{h^2}\sum_{k=1}^{N}
            \int_{(k-1)h}^{kh} \left(\int_{(k-1)h}^{kh} \|f(s)-f(t)\|_{H}\,ds\right)^2\,dt 
\\ \notag 
&\leq \frac{2\|f\|_{L^{\infty}(0, T; H)}}{h^2}\sum_{k=1}^{N}
          \int_{(k-1)h}^{kh} \left(\int_{(k-1)h}^{kh} \|f(s)-f(t)\|_{H}^{1/2}\,ds\right)^2\,dt 
\\ \notag 
&\leq \frac{2\|f\|_{L^{\infty}(0, T; H)}}{h}\sum_{k=1}^{N}
          \int_{(k-1)h}^{kh} \left(\int_{(k-1)h}^{kh} \|f(s)-f(t)\|_{H}\,ds\right)\,dt. 
\end{align}
Here it holds that 
\begin{align}\label{fhf2}
&\frac{2\|f\|_{L^{\infty}(0, T; H)}}{h}\sum_{k=1}^{N}
          \int_{(k-1)h}^{kh} \left(\int_{(k-1)h}^{kh} \|f(s)-f(t)\|_{H}\,ds\right)\,dt 
\\ \notag 
&= \frac{2\|f\|_{L^{\infty}(0, T; H)}}{h}\sum_{k=1}^{N}
      \int_{(k-1)h}^{kh} \left(\int_{(k-1)h}^{kh} 
                           \left\|\int_{t}^{s} \partial_t f(r)\,dr \right\|_H\,ds\right)\,dt 
\\ \notag 
&\leq \frac{2\|f\|_{L^{\infty}(0, T; H)}}{h}\sum_{k=1}^{N}
      \int_{(k-1)h}^{kh} \left(\int_{(k-1)h}^{kh} 
                    \left(\int_{t}^{s} \left\|\partial_t f(r)\right\|_H\,dr\right)\,ds\right)\,dt 
\\ \notag 
&\leq 2\|f\|_{L^{\infty}(0, T; H)}\|\partial_t f\|_{L^1(0, T; H)}h.   
\end{align}
Therefore, combining Lemma \ref{semierroresti}, \eqref{fhf1} and \eqref{fhf2},   
we can prove Theorem \ref{erroresti}.
\qed 
\end{prth2.3}

\begin{remark}\label{about.conv.of.fh}
The above argument can be used also to the check that 
\begin{align}\label{conv.of.fh}
\overline{f}_h \to f \quad\mbox{strongly in}\ L^2(0, T; H) 
\end{align}
as $h\searrow0$\pier{,} 
in the case when $f$ is just in $L^2(0, T; H)$. 
Indeed, by density, for all $\ep > 0$ 
there exists a function $g \in W^{1, 1}(0, T; H)$ 
such that 
$$
\|f - g\|_{L^2(0, T; H)} < \ep.  
$$
By fixing $g$ and introducing $\overline{g}_h$ as in \eqref{overandunderline}, 
with 
$$
g_k := \frac{1}{h}\int_{(k-1)h}^{kh} g(s)\,ds 
$$
for $k = 1, ..., N$, 
the reader can easily verify that 
$$
\|\overline{f}_h - \overline{g}_h\|_{L^2(0, T; H)} < \ep
$$
as well. 
Then there exists $\widehat{h}>0$ sufficiently small such that 
$$
\|g - \overline{g}_h\|_{L^2(0, T; H)} < \ep
$$
for all $h \in (0, \widehat{h})$ (cf.\ \eqref{fhf1} and \eqref{fhf2}). 
Hence this gives a proof of \eqref{conv.of.fh}. 
\end{remark}

\section*{Acknowledgments}
The research of PC is supported by the Italian Ministry of Education, 
University and Research~(MIUR): Dipartimenti di Eccellenza Program (2018--2022) 
-- Dept.~of Mathematics ``F.~Casorati'', University of Pavia. 
In addition, PC gratefully acknowledges some other 
support from the MIUR-PRIN Grant 2015PA5MP7 
``Calculus of Variations'' 
and the GNAMPA (Gruppo Nazionale per l'Analisi Matematica, 
la Probabilit\`a e le loro Applicazioni) of INdAM (Isti\-tuto 
Nazionale di Alta Matematica). 
The research of SK is supported by JSPS Research Fellowships 
for Young Scientists (No.\ 18J21006) 
and JSPS Overseas Challenge Program for Young Researchers. 
%
%
%
%%==============================================================%%
%%==============                                  ==============%%
%%======                                                  ======%%
%%====                                                      ====%%
%%==                         Reference                        ==%%
%%====                                                      ====%%
%%======                                                  ======%%
%%==============                                  ==============%%
%%==============================================================%%


\begin{thebibliography}{99}

\bibitem{ABHR2013}
T.G.\ Amler, N.D.\ Botkin, K.-H.\ Hoffmann, K.A.\ Ruf, 
{\it Regularity of solutions of a phase field model}, 
Dyn.\ Partial Differ.\ Equ.\ {\bf 10}  (2013), 353--365. 

\bibitem{B2013}
B.D.\ Bangola, 
{\it Global and exponential attractors for a Caginalp type phase-field problem}, 
Cent.\ Eur.\ J.\ Math.\ {\bf 11}  (2013), 1651--1676.

\bibitem{Barbu1}
V. Barbu, 
``Nonlinear Semigroups and Differential Equations in Banach spaces'',  
Noordhoff International Publishing, Leyden, 1976. 

\bibitem{Barbu}
V. Barbu,
``Nonlinear Differential Equations of Monotone Types in Banach Spaces'',
Springer, New York, 2010.

\bibitem{BCGMR} 
V.\ Barbu, P.\ Colli, G.\ Gilardi, G.\ Marinoschi, E.\ Rocca, 
{\it Sliding mode control for a nonlinear phase-field system}, 
SIAM J.\ Control Optim.\ {\bf 55}  (2017), 2108--2133. 

\bibitem{BCJV2012}
S.\ Benzoni-Gavage, L.\ Chupin, D.\ Jamet, J.\ Vovelle, 
{\it On a phase field model for solid-liquid phase transitions}, 
Discrete Contin.\ Dyn.\ Syst.\ {\bf 32}  (2012), 1997--2025. 

\bibitem{BrokSpr} 
M.\ Brokate, J.\ Sprekels, 
``Hysteresis and Phase Transitions'',
Springer, New York, 1996.

\bibitem{Cag}
G.\ Caginalp, 
{\it An analysis of a phase field model of a free boundary}, 
Arch.\ Rational Mech.\ Anal.\ {\bf 92} (1986), 205-245.  

\bibitem{CCE2008}
G.\ Caginalp, X.\ Chen, C.\ Eck, 
{\it Numerical tests of a phase field model with second order accuracy}, 
SIAM J.\ Appl.\ Math.\ {\bf 68}  (2008), 1518--1534.

\bibitem{CC2013}
G.\ Canevari, P.\ Colli, 
{\it Convergence properties for a generalization of the Caginalp phase field system}, 
Asymptot.\ Anal.\ {\bf 82}  (2013), 139--162. 

\bibitem{CC2012}
G.\ Canevari, P.\ Colli, 
{\it Solvability and asymptotic analysis of a generalization 
of the Caginalp phase field system}, 
Commun.\ Pure Appl.\ Anal.\ {\bf 11}  (2012), 1959--1982. 

\bibitem{CMM2015}
O.\ C\^{a}rj\u{a}, A.\ Miranville, C.\ Moro\c{s}anu, 
{\it On the existence, uniqueness and regularity of solutions to 
the phase-field system with a general regular potential 
and a general class of nonlinear and non-homogeneous boundary conditions}, 
Nonlinear Anal.\ {\bf 113}  (2015), 190--208.

\bibitem{CCE2011}
X.\ Chen, G.\ Caginalp, E.\ Esenturk, 
{\it Interface conditions for a phase field model with anisotropic 
and non-local interactions}, 
Arch.\ Ration.\ Mech.\ Anal.\ {\bf 202} (2011), 349--372.

\bibitem{CGM2012}
L.\ Cherfils, S.\ Gatti, A.\ Miranville, 
{\it Long time behavior of the Caginalp system with singular potentials 
and dynamic boundary conditions}, 
Commun.\ Pure Appl.\ Anal.\ {\bf 11}  (2012), 2261--2290. 

\bibitem{CM2009}
L.\ Cherfils, A.\ Miranville, 
{\it On the Caginalp system with dynamic boundary conditions 
and singular potentials}, 
Appl.\ Math.\ {\bf 54} (2009), 89--115.

\bibitem{CGM}
P.\ Colli, G.\ Gilardi, G.\ Marinoschi, 
{\it A boundary control problem for a possibly singular phase field system 
with dynamic boundary conditions}, 
J.\ Math.\ Anal.\ Appl.\ {\bf 434} (2016), 432--463.

\bibitem{CoGiMaRo}
P.\ Colli, G.\ Gilardi, G.\ Marinoschi, E.\ Rocca, 
{\it Optimal control for a phase field system with a possibly singular potential}, 
Math.\ Control Relat.\ Fields {\bf 6}  (2016), 95--112. 

\bibitem{CHIS2009}
P.\ Colli, D.\ Hilhorst, F.\ Issard-Roch, G.\ Schimperna, 
{\it Long time convergence for a class of variational phase-field models}, 
Discrete Contin.\ Dyn.\ Syst.\ {\bf 25}  (2009), 63--81. 

\bibitem{CGMQ2017}
M.\ Conti, S.\ Gatti, A.\ Miranville, R. Quintanilla, 
{\it On a Caginalp phase-field system with two temperatures and memory}, 
Milan J.\ Math.\ {\bf 85}  (2017), 1--27. 

\bibitem{CGM2013}	
M.\ Conti, S.\ Gatti, A.\ Miranville, 
{\it Attractors for a Caginalp model with a logarithmic potential 
and coupled dynamic boundary conditions}, 
Anal.\ Appl.\ (Singap.) {\bf 11}  (2013), 1350024, 31 pp.

\bibitem{duvaut}
G.\ Duvaut,
{\it R\' esolution d'un probl\`eme de Stefan (fusion d'un bloc 
de glace \`a z\'ero degr\'e)},  
C.\ R.\ Acad.\ Sci.\ Paris Sr.\ A-B {\bf 276} (1973), A1461--A1463.

\bibitem{EllZheng}
C.M.\ Elliott, S.\ Zheng,  
{\it Global existence and stability of solutions to the phase-field equations, 
in ``Free Boundary Problems''}, 
Internat.\ Ser.\ Numer.\ Math.\ {\bf 95}, 46--58, 
Birkh\"auser
Verlag, Basel, (1990).

\bibitem{fremond}
M.\ Fr\'emond,
``Non-smooth Thermomechanics'',
Springer-Verlag, Berlin, 2002. 

\bibitem{FKY-2017}
T.\ Fukao, S.\ Kurima, T.\ Yokota,   
{\it Nonlinear diffusion equations as asymptotic limits of 
Cahn--Hilliard systems on unbounded domains 
via Cauchy's criterion}, Math.\ Methods Appl.\ Sci.\ {\bf 41} (2018), 2590--2601.   

\bibitem{GraPetSch}
M.\ Grasselli, H.\ Petzeltov\'a, G.\ Schimperna,
{\it Long time behavior of solutions to the Caginalp system with singular potential}, 
Z.\ Anal.\ Anwend.\ {\bf 25} (2006), 51--72.

\bibitem{Jerome}
J.W.\ Jerome, 
``Approximations of Nonlinear Evolution Systems'',
Mathematics in Science and Engineering {\bf 164},
Academic Press Inc., Orlando, 1983. 

\bibitem{HoffJiang}
K.-H.\ Hoffmann, L.S.\ Jiang, 
{\it Optimal control of a phase field model for solidification}, 
Numer.\ Funct.\ Anal.\ Optim.\ {\bf 13} (1992), 11--27. 

\bibitem{HKKY}
K.-H.\ Hoffmann, N.\ Kenmochi, M.\ Kubo, N.\ Yamazaki, 
{\it Optimal control problems for models of phase-field type 
with hysteresis of play operator}, 
Adv.\ Math.\ Sci.\ Appl.\ {\bf 17} (2007), 305--336.

\bibitem{KenmNiez}
N.\ Kenmochi, M.\ Niezg\'odka, 
{\it Evolution systems of nonlinear variational inequalities 
arising in phase change problems},  
Nonlinear Anal.\ {\bf 22} (1994), 1163--1180. 

\bibitem{K2}
S.\ Kurima, 
    {\it Existence and energy estimates of weak solutions for 
nonlocal Cahn--Hilliard equations on unbounded domains}, submitted, 
arXiv:1806.06361 [math.AP]. 

\bibitem{K3}
S.\ Kurima, 
    {\it Asymptotic analysis for Cahn--Hilliard type 
phase field systems related to tumor growth in general domains}, submitted, 
arXiv:1808.03771 [math.AP]. 

\bibitem{KY1}
 S.\ Kurima, T.\ Yokota, 
    {\it A direct approach to quasilinear parabolic equations 
on unbounded domains by Br\'ezis's theory for 
subdifferential operators}, Adv.\ Math.\ Sci.\ Appl.\ {\bf 26} (2017), 221--242.  

\bibitem{KY2}
    S.\ Kurima, T.\ Yokota, 
    {\it Monotonicity methods for nonlinear diffusion
equations and their approximations
with error estimates}, J.\ Differential Equations {\bf 263} (2017), 2024--2050. 

%\bibitem{Lau}
%Ph. Lauren\c cot,
%Long-time behaviour for a model of phase-field type, 
%{\it Proc. Roy. Soc. Edinburgh Sect.~A} {\bf 126} (1996), 167--185.

\bibitem{M2014}
A.\ Miranville, 
{\it Some mathematical models in phase transition}, 
Discrete Contin.\ Dyn.\ Syst.\ Ser.\ S {\bf 7}  (2014), 271--306.

\bibitem{MM2016}
A.\ Miranville, C.\ Moro\c{s}anu, 
{\it On the existence, uniqueness and regularity of solutions 
to the phase-field transition system with 
non-homogeneous Cauchy-Neumann and nonlinear dynamic boundary conditions}, 
Appl.\ Math.\ Model.\ {\bf 40}  (2016), 192--207. 

\bibitem{MN2018}
A.\ Miranville,  A.J.\  Ntsokongo, 
{\it On anisotropic Caginalp phase-field type models with singular nonlinear terms}, 
J.\ Appl.\ Anal.\ Comput.\  {\bf 8}  (2018), 655--674.

\bibitem{MQ2013}
A.\ Miranville, R.\ Quintanilla, 
{\it Spatial decay for several phase-field models}, 
ZAMM Z.\ Angew.\ Math.\ Mech.\  {\bf 93}  (2013), 801--810. 

\bibitem{MQ2011}
A.\ Miranville, R.\ Quintanilla, 
{\it A phase-field model based on a three-phase-lag heat conduction}, 
Appl.\ Math.\ Optim.\ {\bf 63}  (2011), 133--150. 

\bibitem{MQ2009}
A.\ Miranville, R.\ Quintanilla, 
{\it Some generalizations of the Caginalp phase-field system}, 
Appl.\ Anal.\  {\bf 88} (2009), 877--894. 

\bibitem{M2015}
T.\ Miyasita, 
{\it Global existence and exponential attractor of solutions of Fix-Caginalp equation}, 
Sci.\ Math.\ Jpn.\ {\bf 77}  (2015), 339--355.

\bibitem{RS06}
R.\ Rossi, G.\ Savar\'e,
{\it Gradient flows of non convex functionals in Hilbert spaces and applications}, 
ESAIM Control Optim.\ Calc.\ Var.\ {\bf 12} (2006), 564--614.

\bibitem{Simon}
J.\ Simon, {\it Compact sets in the space $L^p(0, T; B)$}, 
Ann.\ Mat.\ Pura Appl.\ (4) {\bf 146} (1987), 65--96. 

\bibitem{V1996}
A.\ Visintin, 
``Models of phase transitions'', 
Progress in Nonlinear Differential Equations 
and their Applications, {\bf 28}. Birkh\"auser Boston, Inc., Boston, MA, 1996. 


\end{thebibliography}
\end{document}